\newtheorem{theorem}{Theorem}
\newtheorem{remark}[theorem]{Remark}
\newenvironment{proof}[1][Proof]{\noindent\textbf{#1.} }{\ \rule{0.5em}{0.5em}}
\begin{document}

\title{Recursive formulas for $_{2}F_{1}$ and $_{3}F_{2}$ hypergeometric
series}
\author{J. L. Gonz\'{a}lez-Santander ORCID: 0000-0001-5348-4967 \and %
C/ Ovidi Montllor i Mengual 7, pta.9 \and 46017,
Valencia, Spain. \\
juanluis.gonzalezsantander@gmail.com\\
}
\date{}
\maketitle

\begin{abstract}
Recursive formulas extending some known $_{2}F_{1}$ and $_{3}F_{2}$
summation formulas by using contiguous relations have been obtained. On the
one hand, these recursive equations are quite suitable for symbolic and
numerical evaluation by means of computer algebra. On the other hand,
sometimes closed-forms of such extensions can be derived by induction. It is
expected that the method used to obtain the different recursive equations
can be applied to extend other hypergeometric summation formulas given in
the literature.
\end{abstract}

\textbf{Keywords}:\ generalized hypergeometric functions, hypergeometric
summation formulas, contiguous hypergeometric identities, recursive
hypergeometric formulas

\textbf{Mathematics Subject Classification}:\ 33C05, 33C20

\section{Introduction}

Hypergeometric and generalized hypergeometric functions have many
applications as solutions of problems concerning mathematics \cite{Jason,
Branges}\ and physics \cite{Calabra, Moch}. It is worth noting that whenever
these functions can be expressed in terms of gamma functions, the results
are very important in many applications and also from a theoretical point of
view. However, few summation theorems are available in the literature \cite%
{Andrews}. Thereby, in the last decades, many publications have devoted to
broaden these classical results. For instance, it is well-known that by
repeated application of the contiguous relations of the hypergeometric
function $_{2}F_{1}\left( a,b;c;z\right) $ \cite[Sect. 15.5(ii)]{DLMF}, any
function $_{2}F_{1}\left( a+k,b+\ell ;c+m;z\right) $, in which $k$, $\ell $,
and $m$ are integers, can be expressed as as a linear combination of $%
_{2}F_{1}\left( a,b;c;z\right) $ and any of its contiguous functions, with
coefficients that are rational functions of $a$, $b$, $c$, and $z$. Also, by
systematic exploitation of the relations between contiguous functions given
by generalized hypergeometric functions $_{p}F_{q}$ \cite[Sect. 48]%
{Rainville}, we can find in \cite{LavoieRathie} a generalization of Watson's
theorem,\ and in \cite{Lavoie} a generalization of Dixon's theorem. However,
these generalizations extend these theorems for a finite number of integer
numbers summed to certain parameters of the corresponding hypergeometric
functions.

In this paper, we provide recursive formulas in order to extend some known
summation formulas of $_{2}F_{1}$ hypergeometric function at arguments $%
z\neq 1$, and $_{3}F_{2}$ hypergeometric function at argument $z=1$. These
recursive formulas are very suitable for symbolic computation and numerical
evaluation by using computer algebra, and they are not restricted to a
finite number of cases, as in the papers stated above. Despite the fact that these
recursive formulas might be automatically computed using creative
telescoping (also called Wilf-Zeilberger's theory \cite[Sects. 3.10\&11]%
{Andrews}) following the method describe in \cite{Koomwinder}\ for
non-terminating series, we propose here a much more simple approach using contiguous
relations. Also, it is sometimes possible to derive general expressions 
in closed-form by induction from the corresponding recursive equation. 
In fact, the method described in this paper is quite general and can be used 
for many other summation formulas given in the literature, so that here we present 
some selected examples.

This article is organized as follows. In Section \ref{Section: 2F1}\ we
derive some recursive formulas for $_{2}F_{1}$ hypergeometric function at
arguments $z=\frac{1}{2},2,-1$. Section \ref{Section: 3F2}\ extends some
known $_{3}F_{2}$ hypergeometric summation formulas at argument unity, such
as Pfaff-Saalschutz sum, Watson's sum and Dixon's sum, among others.
Finally, the conclusions are collected in Section \ref{Section: Conclusions}.

\section{$_{2}F_{1}$ recursive formulas\label{Section: 2F1}}

In this Section we consider Gauss's hypergeometric series, defined as%
\begin{equation}
_{2}F_{1}\left( \left. 
\begin{array}{c}
a,b \\ 
c%
\end{array}%
\right\vert z\right) =\sum_{m=0}^{\infty }\frac{\left( a\right) _{m}\left(
b\right) _{m}}{m!\left( c\right) _{m}}z^{m},  \label{2F1_def}
\end{equation}%
where $\left( \alpha \right) _{m}=\Gamma \left( \alpha +m\right) /\Gamma
\left( \alpha \right) $ denotes the Pochhammer symbol. When $a$ or $b$ are
negative integers, the series (\ref{2F1_def})\ terminates, so that it
converges. If it is not so, (\ref{2F1_def}) is absolutely convergent when $%
\left\vert z\right\vert <1$, except when the parameter $c=0,-1,-2,\ldots $
In the case in which $\left\vert z\right\vert =1$, the series is absolutely
convergent when \textrm{Re}$\left( c-a-b\right) >0$, conditionally
convergent when $-1<\,$\textrm{Re}$\left( c-a-b\right) \leq 0$ and $z\neq 1$%
;\ and divergent when \textrm{Re}$\left( c-a-b\right) \leq -1$.

In order to obtain the recursive formulas stated below, we will use known
contiguous relations connecting Gauss`s hypergeometric series $%
_{2}F_{1}\left( a,b;c;z\right) $ with other two hypergeometric series which
vary their parameters $a$, $b$, $c$, $\pm 1$ unit. The idea is to
particularize the parameters in the contiguous relation in such a way that
we can define a recursive relation in which the initial iteration ($k=0$)\
is given by a known summation formula. Iterating the recursive relation and
with the aid of computer algebra is quite easy to obtain summation formulas
for other integers $k$.

First, we generalize Gauss's second theorem. In \cite{LavoieWhimple}\ we can
find an extension of Gauss's second theorem for $j=0$ and some particular
values of $k$.

\begin{theorem}
If $k\in 
\mathbb{N}
$ and $j=0,1$, then the following recursive equation holds true:%
\begin{equation}
G_{k}\left( a,b,j\right) =G_{k-1}\left( a,b,j\right) +\frac{a}{a+b+1+j}%
G_{k-1}\left( a+1,b+1,j\right) ,  \label{Recursive_Gauss_2nd}
\end{equation}%
where%
\begin{equation*}
G_{k}\left( a,b,j\right) =\,_{2}F_{1}\left( \left. 
\begin{array}{c}
a,b+k \\ 
\frac{a+b+j+1}{2}%
\end{array}%
\right\vert \frac{1}{2}\right) ,
\end{equation*}%
and, according to Gauss's second theorem \cite[Eqn. 15.4.28]{DLMF}%
\begin{equation*}
G_{0}\left( a,b,0\right) =\frac{\sqrt{\pi }\Gamma \left( \frac{a+b+1}{2}%
\right) }{\Gamma \left( \frac{a+1}{2}\right) \Gamma \left( \frac{b+1}{2}%
\right) },\quad a+b\neq -1,-2\ldots ,
\end{equation*}%
and according to \cite[Eqn. 15.4.29]{DLMF}%
\begin{eqnarray}
&&G_{0}\left( a,b,1\right)  \label{G0(a,b,j=1)} \\
&=&\frac{2\sqrt{\pi }}{a-b}\Gamma \left( \frac{a+b}{2}+1\right) \left[ \frac{%
1}{\Gamma \left( \frac{a}{2}\right) \Gamma \left( \frac{b+1}{2}\right) }-%
\frac{1}{\Gamma \left( \frac{a+1}{2}\right) \Gamma \left( \frac{b}{2}\right) 
}\right] .  \notag
\end{eqnarray}
\end{theorem}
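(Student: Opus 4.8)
The plan is to show that the recursive equation (\ref{Recursive_Gauss_2nd}) is nothing more than a single contiguous relation, specialized at $z=\tfrac12$ and evaluated along a carefully chosen parameter path. First I would abbreviate $c:=\frac{a+b+j+1}{2}$, so that the denominator parameter of every $_{2}F_{1}$ appearing in $G_{k}$ is $c$, and crucially $2c=a+b+1+j$, whence the coefficient in (\ref{Recursive_Gauss_2nd}) is $\frac{a}{a+b+1+j}=\frac{a}{2c}$. Writing out the three terms from the definition of $G_{k}$ and setting $\beta:=b+k$, I would record that
\begin{equation*}
G_{k}(a,b,j)={}_{2}F_{1}(a,\beta;c;\tfrac12),\qquad G_{k-1}(a,b,j)={}_{2}F_{1}(a,\beta-1;c;\tfrac12),
\end{equation*}
while the shifted term satisfies $G_{k-1}(a+1,b+1,j)={}_{2}F_{1}(a+1,\beta;c+1;\tfrac12)$, the point being that incrementing both $a$ and $b$ by one sends the denominator parameter $c$ to exactly $c+1$ and the upper parameter $\beta-1=b+1+(k-1)$ back up to $\beta$.

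With this bookkeeping done, the claim (\ref{Recursive_Gauss_2nd}) reduces to the single identity
\begin{equation*}
{}_{2}F_{1}(a,\beta;c;\tfrac12)={}_{2}F_{1}(a,\beta-1;c;\tfrac12)+\frac{a}{2c}\,{}_{2}F_{1}(a+1,\beta;c+1;\tfrac12),
\end{equation*}
which I would establish in the stronger form, valid for general $z$,
\begin{equation*}
{}_{2}F_{1}(a,\beta;c;z)-{}_{2}F_{1}(a,\beta-1;c;z)=\frac{az}{c}\,{}_{2}F_{1}(a+1,\beta;c+1;z),
\end{equation*}
a standard Gauss contiguous relation (cf. \cite[Sect. 15.5(ii)]{DLMF}). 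The cleanest route is a direct term-by-term comparison of the defining series: in the difference on the left the $m$th coefficient carries the factor $(\beta)_{m}-(\beta-1)_{m}=m\,(\beta)_{m-1}$, so the $m=0$ term drops out; after cancelling the $m$ against $m!$, shifting the summation index to $n=m-1$, and extracting $(a)_{n+1}=a\,(a+1)_{n}$ and $(c)_{n+1}=c\,(c+1)_{n}$, one recognizes precisely $\frac{az}{c}\,{}_{2}F_{1}(a+1,\beta;c+1;z)$. Setting $z=\tfrac12$ then produces the required coefficient $\frac{a}{2c}=\frac{a}{a+b+1+j}$.

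Finally, I would note that the two closed forms for $G_{0}(a,b,0)$ and $G_{0}(a,b,1)$ are not proved here but simply quoted from Gauss's second theorem \cite[Eqns. 15.4.28, 15.4.29]{DLMF}; they furnish the initial iteration $k=0$ from which (\ref{Recursive_Gauss_2nd}) generates every $k\in\mathbb{N}$. There is no serious analytic obstacle in this argument: the series manipulation is elementary and the whole difficulty is organizational. The one point that must be handled with care --- and which is in fact the real mechanism behind the theorem --- is that the denominator parameter $c=(a+b+j+1)/2$ is itself a function of $a$ and $b$, so that the joint shift $(a,b)\mapsto(a+1,b+1)$ built into $G_{k-1}(a+1,b+1,j)$ produces exactly the single increment $c\mapsto c+1$ demanded by the contiguous relation. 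Verifying this parameter alignment, rather than any estimate or convergence issue, is where the attention goes.
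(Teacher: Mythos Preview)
Your proposal is correct and takes essentially the same approach as the paper: both reduce the recursion to the single contiguous relation ${}_{2}F_{1}(\alpha,\beta+1;\gamma;z)={}_{2}F_{1}(\alpha,\beta;\gamma;z)+\frac{\alpha z}{\gamma}\,{}_{2}F_{1}(\alpha+1,\beta+1;\gamma+1;z)$, specialized with $\alpha=a$, $\beta=b+k-1$, $\gamma=\frac{a+b+j+1}{2}$, $z=\tfrac12$. The only cosmetic difference is that the paper simply cites this relation from Lebedev, whereas you supply the one-line series verification; your explicit remark that the joint shift $(a,b)\mapsto(a+1,b+1)$ sends $c\mapsto c+1$ makes the mechanism more transparent than the paper's terse substitution.
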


\begin{proof}
Consider the contiguous relation \cite[Eqn. 9.2.13]{Lebedev} 
\begin{equation*}
_{2}F_{1}\left( \left. 
\begin{array}{c}
\alpha ,\beta +1 \\ 
\gamma%
\end{array}%
\right\vert z\right) =\,_{2}F_{1}\left( \left. 
\begin{array}{c}
\alpha ,\beta \\ 
\gamma%
\end{array}%
\right\vert z\right) +\frac{\alpha z}{\gamma }\,_{2}F_{1}\left( \left. 
\begin{array}{c}
\alpha +1,\beta +1 \\ 
\gamma +1%
\end{array}%
\right\vert z\right) ,
\end{equation*}%
and set $\alpha =a$, $\beta =b+k$, $\gamma =\frac{a+b+j+1}{2}$, and $z=\frac{%
1}{2}$.
\end{proof}

\bigskip

From the above recursive equation (\ref{Recursive_Gauss_2nd}), we prove the
following identity.

\begin{theorem}
If $k\in 
\mathbb{N}
$, then%
\begin{equation}
_{2}F_{1}\left( \left. 
\begin{array}{c}
a,a+k \\ 
a+1%
\end{array}%
\right\vert \frac{1}{2}\right) =2^{a}\left[ 2^{k-1}-\frac{k-1}{a+1}%
\,_{2}F_{1}\left( \left. 
\begin{array}{c}
2-k,a+1 \\ 
a+2%
\end{array}%
\right\vert -1\right) \right] ,  \label{Identity_2F1}
\end{equation}%
where notice that the hypergeometric sum on the RHS\ of (\ref{Identity_2F1}%
)\ is a finite sum.
\end{theorem}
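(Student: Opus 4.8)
The plan is to prove (\ref{Identity_2F1}) by induction on $k$, using the recursion (\ref{Recursive_Gauss_2nd}) as the driving mechanism. First I would recognize that the left-hand side is a particular value of $G_k$: matching the bottom parameter $\frac{a+b+j+1}{2}=a+1$ and the top parameter $b+k=a+k$ forces $b=a$ and $j=1$, so that
\begin{equation*}
f_k(a):=\,_{2}F_{1}\left(\left.
\begin{array}{c}
a,a+k \\
a+1
\end{array}
\right\vert \tfrac{1}{2}\right)=G_k(a,a,1).
\end{equation*}
Specializing (\ref{Recursive_Gauss_2nd}) at $b=a$, $j=1$ (so that $\frac{a}{a+b+1+j}=\frac{a}{2(a+1)}$ and $G_{k-1}(a+1,a+1,1)=f_{k-1}(a+1)$) turns the recursion into the scalar three-term relation
\begin{equation*}
f_k(a)=f_{k-1}(a)+\frac{a}{2(a+1)}\,f_{k-1}(a+1),
\end{equation*}
whose essential feature is that it couples the value at $a$ to the value at $a+1$.

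I would then set up the induction with $a$ kept as a free parameter. The base case $k=1$ is immediate: the repeated parameter $a+1$ cancels, leaving the binomial series $f_1(a)=(1-\tfrac12)^{-a}=2^a$, which agrees with the right-hand side of (\ref{Identity_2F1}) because the prefactor $k-1$ annihilates the hypergeometric term there. Assuming (\ref{Identity_2F1}) at index $k-1$ for all admissible first parameters — in particular at both $a$ and $a+1$ — I would insert the two closed forms into the three-term relation, pull out the common factor $2^a$, and collect the constant piece $2^{k-2}\bigl(1+\frac{a}{a+1}\bigr)=2^{k-2}\frac{2a+1}{a+1}$. What remains is to show that this constant piece, together with the two terminating series ${}_{2}F_{1}(3-k,a+1;a+2;-1)$ and ${}_{2}F_{1}(3-k,a+2;a+3;-1)$ inherited from $f_{k-1}(a)$ and $f_{k-1}(a+1)$, regroups exactly into $2^{k-1}-\frac{k-1}{a+1}\,_{2}F_{1}(2-k,a+1;a+2;-1)$.

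I expect this regrouping to be the main obstacle, since it amounts to a contiguous-type identity linking three different terminating ${}_{2}F_{1}(\cdots;-1)$ series. Rather than treating it as an abstract contiguous relation, I would linearize it: because all three series terminate (the cases $k=1,2$ being trivial, as the coefficients $k-1$ and $k-2$ then vanish), the substitutions $\frac{(2-k)_m(-1)^m}{m!}=\binom{k-2}{m}$ and $\frac{(a+1)_m}{(a+2)_m}=\frac{a+1}{a+1+m}$ recast ${}_{2}F_{1}(2-k,a+1;a+2;-1)$ as the elementary finite sum $(a+1)\sum_{m=0}^{k-2}\frac{1}{a+1+m}\binom{k-2}{m}$, and similarly for the other two. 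The desired equality then reduces to Pascal's recurrence together with a partial-fraction split $\frac{a}{a+m}=1-\frac{m}{a+m}$ and a reindexing $m\mapsto m+1$, i.e. to a finite, purely algebraic verification.

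Finally, I would record a shortcut that bypasses the induction and simultaneously explains why the right-hand side is a finite sum. Applying Pfaff's transformation ${}_{2}F_{1}(a,b;c;z)=(1-z)^{-a}\,_{2}F_{1}\bigl(a,c-b;c;\frac{z}{z-1}\bigr)$ with $z=\tfrac12$ (so that $\frac{z}{z-1}=-1$) gives at once $f_k(a)=2^a\,_{2}F_{1}(a,1-k;a+1;-1)$, which terminates for $k\in\mathbb{N}$. Writing $\frac{(a)_m}{(a+1)_m}=\frac{a}{a+m}$ and $\frac{(1-k)_m(-1)^m}{m!}=\binom{k-1}{m}$ yields $_{2}F_{1}(a,1-k;a+1;-1)=\sum_{m=0}^{k-1}\frac{a}{a+m}\binom{k-1}{m}$; splitting $\frac{a}{a+m}=1-\frac{m}{a+m}$, using $\sum_{m=0}^{k-1}\binom{k-1}{m}=2^{k-1}$, and reindexing the remaining sum reproduces $2^{k-1}-\frac{k-1}{a+1}\,_{2}F_{1}(2-k,a+1;a+2;-1)$ directly. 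Either way the whole statement collapses to the same elementary binomial bookkeeping, which is where I would place the bulk of the care.
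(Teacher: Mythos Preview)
Your proposal is correct. The inductive half follows the paper almost verbatim: the paper also sets $b=a$, $j=1$ in (\ref{Recursive_Gauss_2nd}) to obtain the same scalar recursion $G_k(a)=G_{k-1}(a)+\frac{a}{2(a+1)}G_{k-1}(a+1)$ and then proves the closed form by induction. The only real difference is the anchor: the paper starts at $k=0$ using the digamma evaluation $G_0(a)=2^{a-1}a\bigl[\psi\bigl(\tfrac{a+1}{2}\bigr)-\psi\bigl(\tfrac{a}{2}\bigr)\bigr]$, computes $G_1,\dots,G_4$ symbolically, guesses the partial-fraction sum $G_k(a)=2^a\bigl(2^{k-1}-\sum_{i=1}^{k-1}\frac{(k-1)!}{(i-1)!(k-i-1)!(a+i)}\bigr)$, and only at the very end recognizes this sum as $\frac{k-1}{a+1}\,{}_2F_1(2-k,a+1;a+2;-1)$. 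Your choice to start the induction at $k=1$ via the cancellation ${}_2F_1(a,a+1;a+1;\tfrac12)=2^a$ is cleaner and avoids the digamma detour entirely; otherwise the inductive bookkeeping is the same.

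Your Pfaff shortcut, however, is a genuinely different route that the paper does not take. Applying ${}_2F_1(a,b;c;z)=(1-z)^{-a}{}_2F_1\bigl(a,c-b;c;\tfrac{z}{z-1}\bigr)$ at $z=\tfrac12$ collapses the whole problem to the terminating series $2^a\,{}_2F_1(a,1-k;a+1;-1)$ in one stroke, and your binomial manipulation (splitting $\frac{a}{a+m}=1-\frac{m}{a+m}$, using $m\binom{k-1}{m}=(k-1)\binom{k-2}{m-1}$, and reindexing) then delivers (\ref{Identity_2F1}) without any recursion, computer algebra, or conjecture-then-verify step. This is strictly shorter and more transparent than the paper's argument; what the paper's approach buys in return is that it illustrates the general recursive method advertised throughout the article, whereas your Pfaff argument is specific to this particular identity.
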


\begin{proof}
Taking $b=a$ and $j=1$, (\ref{Recursive_Gauss_2nd})\ is reduced to%
\begin{equation}
G_{k}\left( a\right) =G_{k-1}\left( a\right) +\frac{a}{2\left( a+1\right) }%
G_{k-1}\left( a+1\right) ,  \label{Recursive_G_k(a)}
\end{equation}%
where%
\begin{equation}
G_{k}\left( a\right) =\,_{2}F_{1}\left( \left. 
\begin{array}{c}
a,a+k \\ 
a+1%
\end{array}%
\right\vert \frac{1}{2}\right) ,  \label{G_k(a)}
\end{equation}%
and according to \cite[Eqn. 7.3.7(16)]{Prudnikov3}%
\begin{equation}
G_{0}\left( a\right) =2^{a-1}a\left[ \psi \left( \frac{a+1}{2}\right) -\psi
\left( \frac{a}{2}\right) \right] ,  \label{G0(a)}
\end{equation}%
where $\psi \left( z\right) $ denotes the digamma function. Now, from (\ref%
{Recursive_G_k(a)}) and (\ref{G0(a)}), and with the aid of computer algebra,
perform the first iterations as:%
\begin{eqnarray*}
G_{1}\left( a\right) &=&2^{a}, \\
G_{2}\left( a\right) &=&2^{a}\left( 2-\frac{1}{a+1}\right) , \\
G_{3}\left( a\right) &=&2^{a}\left( 4-\frac{2}{a+1}-\frac{2}{a+2}\right) , \\
G_{4}\left( a\right) &=&2^{a}\left( 8-\frac{3}{a+1}-\frac{6}{a+2}-\frac{3}{%
a+3}\right) ,
\end{eqnarray*}%
thus, we can establish the conjecture%
\begin{equation}
G_{k}\left( a\right) =2^{a}\left( 2^{k-1}-\sum_{i=1}^{k-1}\frac{\left(
k-1\right) !}{\left( i-1\right) !\left( k-i-1\right) !\left( a+i\right) }%
\right) ,  \label{Gk(a)_sum}
\end{equation}%
which can be proved by induction. Finally, rewrite (\ref{Gk(a)_sum})\
expressing the sum\ therein as a hypergeometric function and match the
result to (\ref{G_k(a)}), to obtain (\ref{Identity_2F1}).
\end{proof}

\bigskip

Next, we generalized the result given in \cite{KimRathie}, which is given in
table form for $k=-1,-2,-3,-4,-5$.

\begin{theorem}
If $k\in 
\mathbb{Z}
^{-}$ and $n\in 
\mathbb{Z}
^{+}$, then the following recursive equation holds true:%
\begin{eqnarray}
&&G_{k}\left( n,a\right)  \label{Recursive_Srivastava} \\
&=&\frac{2\left( a+k-1\right) \left( 2a+k+n-1\right) }{k\left( 2a+k-1\right) 
}G_{k+1}\left( n,a\right) -\frac{2a+k-2}{k}G_{k+1}\left( n,a-1\right) , 
\notag
\end{eqnarray}%
where%
\begin{equation*}
G_{k}\left( n,a\right) =\,_{2}F_{1}\left( \left. 
\begin{array}{c}
-n,a \\ 
2a-1+k%
\end{array}%
\right\vert 2\right) ,
\end{equation*}%
and, according to \cite{Srivastava}%
\begin{eqnarray*}
&&G_{0}\left( n,a\right) \\
&=&\frac{\Gamma \left( a-\frac{1}{2}\right) }{\sqrt{\pi }}\left[ \frac{%
1+\left( -1\right) ^{n}}{2}\frac{\Gamma \left( \frac{n+1}{2}\right) }{\Gamma
\left( a+\frac{n-1}{2}\right) }-\frac{1-\left( -1\right) ^{n}}{2}\frac{%
\Gamma \left( \frac{n}{2}+1\right) }{\Gamma \left( a+\frac{n}{2}\right) }%
\right] .
\end{eqnarray*}
\end{theorem}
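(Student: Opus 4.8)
The plan is to read the recursion (\ref{Recursive_Srivastava}) off a single contiguous relation obtained by specializing the parameters of the generic Gauss series ${}_2F_1(\alpha,\beta;\gamma;z)$. Setting $\alpha=-n$, $\beta=a$, $\gamma=2a+k-1$ and $z=2$, a direct inspection of the parameters shows that $G_k(n,a)={}_2F_1(\alpha,\beta;\gamma;z)$, that $G_{k+1}(n,a)={}_2F_1(\alpha,\beta;\gamma+1;z)$ (only the lower parameter is raised by one), and that $G_{k+1}(n,a-1)={}_2F_1(\alpha,\beta-1;\gamma-1;z)$ (both the upper parameter $\beta$ and the lower parameter $\gamma$ are lowered by one). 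Thus (\ref{Recursive_Srivastava}) is precisely a linear relation among ${}_2F_1(\alpha,\beta;\gamma;z)$, its neighbour with $\gamma\mapsto\gamma+1$, and its doubly shifted relative with $(\beta,\gamma)\mapsto(\beta-1,\gamma-1)$. Because $n\in\mathbb{Z}^{+}$ all three series terminate, so the evaluations at $z=2$ are finite sums and convergence is not an issue.

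The doubly shifted function ${}_2F_1(\alpha,\beta-1;\gamma-1;z)$ is not one of the six elementary neighbours, so it cannot occur in a single Gauss relation; I would therefore build the required identity by eliminating an auxiliary term between two standard relations of \cite[Sect. 15.5(ii)]{DLMF}, the common argument $z$ being suppressed in the displays. The first ingredient is the three-term relation contiguous in the lower parameter,
\[
\gamma(\gamma-1)(z-1)\,{}_2F_1(\alpha,\beta;\gamma-1)+\gamma\big[\gamma-1-(2\gamma-\alpha-\beta-1)z\big]\,{}_2F_1(\alpha,\beta;\gamma)+(\gamma-\alpha)(\gamma-\beta)z\,{}_2F_1(\alpha,\beta;\gamma+1)=0,
\]
and the second is the relation linking ${}_2F_1(\alpha,\beta;\gamma)$, ${}_2F_1(\alpha,\beta-1;\gamma)$ and ${}_2F_1(\alpha,\beta;\gamma+1)$, namely
\[
\gamma(1-z)\,{}_2F_1(\alpha,\beta;\gamma)-\gamma\,{}_2F_1(\alpha,\beta-1;\gamma)+(\gamma-\alpha)z\,{}_2F_1(\alpha,\beta;\gamma+1)=0.
\]
Applying the second relation at the shifted base point $\gamma\mapsto\gamma-1$ expresses ${}_2F_1(\alpha,\beta;\gamma-1)$ in terms of ${}_2F_1(\alpha,\beta-1;\gamma-1)$ and ${}_2F_1(\alpha,\beta;\gamma)$; substituting this into the first relation cancels the term in ${}_2F_1(\alpha,\beta;\gamma-1)$ and leaves
\[
\gamma\big[(\gamma-1)-(\gamma-\beta)z\big]\,{}_2F_1(\alpha,\beta;\gamma)=\gamma(\gamma-1)\,{}_2F_1(\alpha,\beta-1;\gamma-1)-(\gamma-\alpha)(\gamma-\beta)z\,{}_2F_1(\alpha,\beta;\gamma+1).
\]

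Finally I would substitute $\alpha=-n$, $\beta=a$, $\gamma=2a+k-1$, $z=2$ and divide through. The decisive simplification is that the bracket collapses, $(\gamma-1)-(\gamma-\beta)z=(2a+k-2)-2(a+k-1)=-k$, so that the common denominator becomes $\gamma\big[(\gamma-1)-(\gamma-\beta)z\big]=-k(2a+k-1)$; using $\gamma-\alpha=2a+k+n-1$ and $\gamma-\beta=a+k-1$, the coefficients of ${}_2F_1(\alpha,\beta;\gamma+1)$ and ${}_2F_1(\alpha,\beta-1;\gamma-1)$ reduce to $\tfrac{2(a+k-1)(2a+k+n-1)}{k(2a+k-1)}$ and $-\tfrac{2a+k-2}{k}$, which is exactly (\ref{Recursive_Srivastava}). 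The base case $G_0(n,a)$ is Srivastava's evaluation \cite{Srivastava}, and iterating the relation downward over $k\in\mathbb{Z}^{-}$ produces the remaining cases. I expect the only delicate point to be the elimination of the ${}_2F_1(\alpha,\beta;\gamma-1)$ term and the attendant sign bookkeeping; the reward is the collapse of the bracket to $-k$, which is precisely what forces every denominator to reduce to $k$.
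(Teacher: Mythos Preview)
Your argument is correct and follows essentially the same strategy as the paper: combine two Gauss contiguous relations to eliminate an auxiliary ${}_2F_1$, obtain a three-term identity linking the three functions that become $G_k(n,a)$, $G_{k+1}(n,a)$ and $G_{k+1}(n,a-1)$, and then specialize parameters so that the bracket collapses to $-k$. The only cosmetic difference is the choice of pair: the paper eliminates ${}_2F_1(\alpha+1,\beta+1;\gamma+2;z)$ between \cite[Eqns.~9.2.7\&14]{Lebedev} and substitutes $\alpha=-n$, $\beta=a-1$, $\gamma=2a+k-2$, whereas you eliminate ${}_2F_1(\alpha,\beta;\gamma-1;z)$ between the $\gamma$-shift relation and the $(\beta,\gamma)$-shift relation and substitute $\alpha=-n$, $\beta=a$, $\gamma=2a+k-1$; both routes land on the same three-term identity.
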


\begin{proof}
Consider the following contiguous relations \cite[Eqns. 9.2.7\&14]{Lebedev}%
\begin{eqnarray}
&&\gamma \left( \gamma +1\right) \,_{2}F_{1}\left( \left. 
\begin{array}{c}
\alpha ,\beta \\ 
\gamma%
\end{array}%
\right\vert z\right)  \label{Lebedev_1} \\
&=&\gamma \left( \gamma -\alpha +1\right) \,_{2}F_{1}\left( \left. 
\begin{array}{c}
\alpha ,\beta +1 \\ 
\gamma +2%
\end{array}%
\right\vert z\right)  \notag \\
&&+\alpha \left[ \gamma -\left( \gamma -\beta \right) z\right]
\,_{2}F_{1}\left( \left. 
\begin{array}{c}
\alpha +1,\beta +1 \\ 
\gamma +2%
\end{array}%
\right\vert z\right) ,  \notag
\end{eqnarray}%
and%
\begin{eqnarray}
&&_{2}F_{1}\left( \left. 
\begin{array}{c}
\alpha ,\beta +1 \\ 
\gamma +1%
\end{array}%
\right\vert z\right) -\,_{2}F_{1}\left( \left. 
\begin{array}{c}
\alpha ,\beta \\ 
\gamma%
\end{array}%
\right\vert z\right)  \label{Lebedev_2} \\
&=&\,\frac{\alpha \left( \gamma -\beta \right) z}{\gamma \left( \gamma
+1\right) }\,_{2}F_{1}\left( \left. 
\begin{array}{c}
\alpha +1,\beta +1 \\ 
\gamma +2%
\end{array}%
\right\vert z\right) .  \notag
\end{eqnarray}%
Therefore, eliminating the hypergeometric function on the RHS\ of (\ref%
{Lebedev_1})\ and (\ref{Lebedev_2}), we arrive at%
\begin{eqnarray*}
&&\gamma \,_{2}F_{1}\left( \left. 
\begin{array}{c}
\alpha ,\beta \\ 
\gamma%
\end{array}%
\right\vert z\right) -\frac{\left( \gamma -\alpha +1\right) \left( \gamma
-\beta \right) z}{\gamma +1}\,_{2}F_{1}\left( \left. 
\begin{array}{c}
\alpha ,\beta +1 \\ 
\gamma +2%
\end{array}%
\right\vert z\right) \\
&=&\left[ \gamma -\left( \gamma -\beta \right) z\right] \,_{2}F_{1}\left(
\left. 
\begin{array}{c}
\alpha ,\beta +1 \\ 
\gamma +1%
\end{array}%
\right\vert z\right) .
\end{eqnarray*}%
Substitute now $\alpha =-n$, $\beta =a-1$, $\gamma =2\left( a-1\right) +k$,
and $z=2$ to obtain (\ref{Recursive_Srivastava}).
\end{proof}

\bigskip

Finally, we generalize Kummer's theorem. This case has been discussed more
extensively in \cite{Choi}.

\begin{theorem}
If $k\in 
\mathbb{N}
$, then%
\begin{equation}
G_{k+1}\left( a,b\right) =\frac{b+k}{k}G_{k}\left( a,b\right) -\frac{%
2b\left( a+k+1\right) }{k\left( a-b+1\right) }G_{k}\left( a+2,b+1\right) ,
\label{Recursive_Kummer}
\end{equation}%
where%
\begin{equation}
G_{k}\left( a,b\right) =\,_{2}F_{1}\left( \left. 
\begin{array}{c}
a+k,b \\ 
a-b+1%
\end{array}%
\right\vert -1\right) ,  \label{Gk(a,b)_Kummer}
\end{equation}%
and, according to \cite{Choi}%
\begin{eqnarray*}
&&G_{1}\left( a,b\right) \\
&=&\frac{\sqrt{\pi }\Gamma \left( a+1-b\right) }{2^{a+1}}\left( \frac{1}{%
\Gamma \left( \frac{a}{2}+1\right) \Gamma \left( \frac{a+1}{2}-b\right) }+%
\frac{1}{\Gamma \left( \frac{a+1}{2}\right) \Gamma \left( \frac{a}{2}%
-b+1\right) }\right) .
\end{eqnarray*}
\end{theorem}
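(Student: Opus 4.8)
The plan is to follow the same template as the preceding proofs: match the three hypergeometric functions appearing in (\ref{Recursive_Kummer}) to a base $\,_{2}F_{1}$ together with two of its contiguous neighbours, and then extract (\ref{Recursive_Kummer}) from a contiguous relation specialized to $z=-1$. The decisive choice is the parametrization $\alpha =a+k+1$, $\beta =b$, $\gamma =a-b+1$, $z=-1$. Under it one has $G_{k+1}(a,b)={}_{2}F_{1}(\alpha ,\beta ;\gamma ;z)$ as the base function, $G_{k}(a,b)={}_{2}F_{1}(\alpha -1,\beta ;\gamma ;z)$ as its $\alpha$-lowered neighbour, and $G_{k}(a+2,b+1)={}_{2}F_{1}(\alpha +1,\beta +1;\gamma +1;z)$ as its ``both-raised'' neighbour, so that all three functions in (\ref{Recursive_Kummer}) are simultaneously accounted for.

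First I would write down two contiguous relations whose common member is the $\alpha$-raised function ${}_{2}F_{1}(\alpha +1,\beta ;\gamma ;z)$. The first is the three-term Gauss relation \cite[Sect. 15.5(ii)]{DLMF}
\begin{equation*}
(\gamma -\alpha )\,{}_{2}F_{1}(\alpha -1,\beta ;\gamma ;z)+\bigl[2\alpha -\gamma +(\beta -\alpha )z\bigr]\,{}_{2}F_{1}(\alpha ,\beta ;\gamma ;z)+\alpha (z-1)\,{}_{2}F_{1}(\alpha +1,\beta ;\gamma ;z)=0,
\end{equation*}
which links the base function with its $\alpha$-raised and $\alpha$-lowered neighbours. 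The second is the $\alpha \leftrightarrow \beta$ symmetric partner of \cite[Eqn. 9.2.13]{Lebedev} used above, namely
\begin{equation*}
{}_{2}F_{1}(\alpha +1,\beta ;\gamma ;z)={}_{2}F_{1}(\alpha ,\beta ;\gamma ;z)+\frac{\beta z}{\gamma }\,{}_{2}F_{1}(\alpha +1,\beta +1;\gamma +1;z),
\end{equation*}
which expresses the $\alpha$-raised neighbour through the base and both-raised functions.

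Next I would eliminate the common function ${}_{2}F_{1}(\alpha +1,\beta ;\gamma ;z)$ between these two relations, exactly as the term ${}_{2}F_{1}(\alpha +1,\beta +1;\gamma +2;z)$ was eliminated in the proof of (\ref{Recursive_Srivastava}). This produces a single three-term relation among ${}_{2}F_{1}(\alpha -1,\beta ;\gamma ;z)$, ${}_{2}F_{1}(\alpha ,\beta ;\gamma ;z)$ and ${}_{2}F_{1}(\alpha +1,\beta +1;\gamma +1;z)$ with coefficients polynomial in $\alpha ,\beta ,\gamma ,z$. Setting $z=-1$ triggers the simplifications $z(z-1)=2$ and $\beta z=-\beta$, and substituting $\alpha =a+k+1$, $\beta =b$, $\gamma =a-b+1$ collapses the coefficients via $\alpha -\gamma =b+k$ and $\alpha -\gamma -\beta =k$. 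Solving the resulting equation for ${}_{2}F_{1}(\alpha ,\beta ;\gamma ;-1)=G_{k+1}(a,b)$ then gives (\ref{Recursive_Kummer}).

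I expect the main obstacle to be twofold: choosing the parametrization so that the base, $\alpha$-lowered and both-raised functions coincide with $G_{k+1}(a,b)$, $G_{k}(a,b)$ and $G_{k}(a+2,b+1)$ all at once, and then verifying that the specialization $z=-1$ is precisely what manufactures the numerical factor $2$ and the clean denominators $k$ and $a-b+1$ of (\ref{Recursive_Kummer}). The elimination itself is routine bookkeeping once the correct pair of relations is in hand, and no base case needs separate treatment since the initial value $G_{1}(a,b)$ is furnished by the extension of Kummer's theorem quoted from \cite{Choi}.
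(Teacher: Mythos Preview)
Your proposal is correct and essentially matches the paper's approach: both arguments produce the same three-term contiguous relation among ${}_2F_1(\alpha-1,\beta;\gamma;z)$, ${}_2F_1(\alpha,\beta;\gamma;z)$ and ${}_2F_1(\alpha+1,\beta+1;\gamma+1;z)$ and then specialize it with $\alpha=a+k+1$, $\beta=b$, $\gamma=a-b+1$, $z=-1$. The only cosmetic difference is that the paper reads this relation off in one step from \cite[Eqns.~15.5.20 and 15.5.1]{DLMF} (writing $\alpha\to\alpha+1$), whereas you obtain the identical identity by eliminating ${}_2F_1(\alpha+1,\beta;\gamma;z)$ between the Gauss three-term relation and the $\alpha\leftrightarrow\beta$ version of \cite[Eqn.~9.2.13]{Lebedev}.
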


\begin{proof}
From the identity \cite[Eqn. 15.5.20]{DLMF}\ and the differentiation formula 
\cite[Eqn. 15.5.1]{DLMF}, setting $\alpha \rightarrow \alpha +1$, we arrive
at%
\begin{eqnarray*}
&&z\left( 1-z\right) \frac{\left( \alpha +1\right) \beta }{\gamma }%
\,_{2}F_{1}\left( \left. 
\begin{array}{c}
\alpha +2,\beta +1 \\ 
\gamma +1%
\end{array}%
\right\vert z\right) \\
&=&\left( \gamma -\alpha +1\right) \,_{2}F_{1}\left( \left. 
\begin{array}{c}
\alpha ,\beta \\ 
\gamma%
\end{array}%
\right\vert z\right) +\left( \alpha +1-\gamma +\beta z\right)
\,_{2}F_{1}\left( \left. 
\begin{array}{c}
\alpha +1,\beta \\ 
\gamma%
\end{array}%
\right\vert z\right) .
\end{eqnarray*}%
Thereby, taking $\alpha =a+k$, $\beta =b$, $\gamma =a-b+1$, and $z=-1$, we
arrive at (\ref{Recursive_Kummer}).
\end{proof}

\bigskip

Note that $\forall k=0$, (\ref{Gk(a,b)_Kummer})\ reduces to Kummer's
summation formula \cite[Eqn. 15.4.26]{DLMF}, but in this case, the recursive
equation (\ref{Recursive_Kummer}) collapses. However, it is worth noting
that in \cite{Choi}\ we find a closed-form of (\ref{Gk(a,b)_Kummer}), which
reads as%
\begin{equation}
G_{k}\left( a,b\right) =\frac{\Gamma \left( 1+a-b\right) }{2\Gamma \left(
a+k\right) }\sum_{m=0}^{k}\binom{k}{m}\frac{\Gamma \left( \frac{a+k+m}{2}%
\right) }{\Gamma \left( \frac{a-k+m}{2}-b+1\right) }.  \label{Gk_Choi}
\end{equation}

As by-product, we can obtain an interesting identity, inserting (\ref%
{Gk_Choi}) in the recursive formula (\ref{Recursive_Kummer}). Direct
substitution yields%
\begin{eqnarray}
&&\sum_{m=0}^{k+1}\binom{k+1}{m}\frac{\Gamma \left( \frac{a+k+m+1}{2}\right) 
}{\Gamma \left( \frac{a-k+m+1}{2}-b\right) }  \label{LHS_Choi} \\
&=&\sum_{m=0}^{k}\binom{k}{m}\left( a+k-\frac{bm}{k}\right) \frac{\Gamma
\left( \frac{a+k+m}{2}\right) }{\Gamma \left( \frac{a-k+m}{2}-b+1\right) }. 
\notag
\end{eqnarray}

Now, recast (\ref{LHS_Choi})\ as 
\begin{equation*}
\frac{\Gamma \left( \frac{a+k+1}{2}\right) }{\Gamma \left( \frac{a-k+1}{2}%
-b\right) }+\sum_{m=0}^{k}\binom{k}{m}\frac{\left( k+1\right) \left(
a+k+m\right) }{2\left( m+1\right) }\frac{\Gamma \left( \frac{a+k+m}{2}%
\right) }{\Gamma \left( \frac{a-k+m}{2}-b+1\right) },
\end{equation*}%
thus we obtain this interesting formula, $\forall k=1,2,\ldots $%
\begin{eqnarray}
&&\frac{\Gamma \left( \frac{a+k+1}{2}\right) }{\Gamma \left( \frac{a-k+1}{2}%
-b\right) }  \label{Identity_Choi} \\
&=&\sum_{m=0}^{k}\binom{k}{m}\left( a+\frac{k-1}{2}-\frac{bm}{k}-\frac{%
\left( k+1\right) \left( a+k-1\right) }{2\left( m+1\right) }\right) \frac{%
\Gamma \left( \frac{a+k+m}{2}\right) }{\Gamma \left( \frac{a-k+m}{2}%
-b+1\right) }.  \notag
\end{eqnarray}

It is worth noting that (\ref{Identity_Choi}) can be proven using computer
algebra.

\section{$_{3}F_{2}$ recursive formulas\label{Section: 3F2}}

The generalized hypergeometric series $_{p}F_{q}$ is a natural
generalization of the Gauss's series $_{2}F_{1}$, and is defined as 
\begin{equation}
_{p}F_{q}\left( \left. 
\begin{array}{c}
a_{1},\ldots ,a_{p} \\ 
b_{1},\ldots ,b_{q}%
\end{array}%
\right\vert z\right) =\sum_{m=0}^{\infty }\frac{\left( a_{1}\right)
_{m}\cdots \left( a_{p}\right) _{m}}{m!\left( b_{1}\right) _{m}\cdots \left(
b_{q}\right) _{m}}z^{m}.  \label{pFq_def}
\end{equation}

If any $a_{j}$, $j=1\ldots p$ is a negative integer, then the series (\ref%
{pFq_def})\ terminates, thus it converges. If (\ref{pFq_def})\ is not a
terminating series, then it converges $\forall \left\vert z\right\vert
<\infty $, if $p\leq q$; and $\forall \left\vert z\right\vert <1$, if $p=q+1$%
. Also, (\ref{pFq_def})\ diverges $\forall z\neq 0$, if $p>q+1$. If $p=q+1$
and $\left\vert z\right\vert =1$, then the series (\ref{pFq_def})\ is
absolutely convergent when \textrm{Re}$\left(
\sum_{j=1}^{q}b_{j}-\sum_{j=1}^{p}a_{j}\right) >0$; conditionally convergent
when $z\neq 1$ and $-1<\,$\textrm{Re}$\left(
\sum_{j=1}^{q}b_{j}-\sum_{j=1}^{p}a_{j}\right) <0$; and divergent when $\,$%
\textrm{Re}$\left( \sum_{j=1}^{q}b_{j}-\sum_{j=1}^{p}a_{j}\right) \leq -1$.

\begin{theorem}
If $k\in 
\mathbb{N}
$, then 
\begin{eqnarray}
G_{k}\left( a,b,c,d\right) &=&G_{k-1}\left( a,b,c,d\right)
\label{Recursive_Miller} \\
&&+\frac{ab}{c\left( d+1\right) }G_{k-1}\left( a+1,b+1,c+1,d+1\right) , 
\notag
\end{eqnarray}%
where%
\begin{equation}
G_{k}\left( a,b,c,d\right) =\,_{3}F_{2}\left( \left. 
\begin{array}{c}
a,b,c+k+1 \\ 
d+1,c%
\end{array}%
\right\vert 1\right) ,  \label{G_k_Miller_def}
\end{equation}%
and where, renaming parameters in \cite{Miller}, we have%
\begin{equation}
G_{0}\left( a,b,c,d\right) =\frac{\Gamma \left( d+1\right) \Gamma \left(
d-a-b\right) }{c\,\Gamma \left( d-a+1\right) \Gamma \left( d-b+1\right) }%
\left[ a\left( b-c\right) +c\left( d-b\right) \right] .  \label{G0_Miller}
\end{equation}
\end{theorem}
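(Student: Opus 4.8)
The plan is to follow exactly the template used in the proof of the first theorem: locate a $_{3}F_{2}$ contiguous relation that increments a single upper parameter by one unit while producing a fully parameter-shifted companion series, and then specialize its free parameters so that the three $_{3}F_{2}$'s appearing collapse onto $G_{k}$, $G_{k-1}$, and a shifted $G_{k-1}$. Since only the third upper parameter $c+k+1$ in \eqref{G_k_Miller_def} depends on $k$, the correct tool is the $_{3}F_{2}$ analogue of the relation \cite[Eqn. 9.2.13]{Lebedev} employed earlier for $_{2}F_{1}$.

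First I would establish that relation, namely
\[
_{3}F_{2}\left( \left.
\begin{array}{c}
\alpha ,\beta ,\gamma +1 \\
\delta ,\epsilon
\end{array}
\right\vert z\right) =\,_{3}F_{2}\left( \left.
\begin{array}{c}
\alpha ,\beta ,\gamma \\
\delta ,\epsilon
\end{array}
\right\vert z\right) +\frac{\alpha \beta z}{\delta \epsilon }\,_{3}F_{2}\left( \left.
\begin{array}{c}
\alpha +1,\beta +1,\gamma +1 \\
\delta +1,\epsilon +1
\end{array}
\right\vert z\right) .
\]
This follows by a term-by-term comparison of \eqref{pFq_def}: using $\left( \gamma +1\right) _{m}-\left( \gamma \right) _{m}=\frac{m}{\gamma }\left( \gamma \right) _{m}$, the difference of the first two series reduces to a single sum over $m\geq 1$; after the index shift $m\mapsto m+1$ and the factorizations $\left( \alpha \right) _{m+1}=\alpha \left( \alpha +1\right) _{m}$ (and likewise for $\beta ,\gamma ,\delta ,\epsilon$), the prefactor $\frac{\alpha \beta z}{\delta \epsilon }$ emerges and the residual sum is precisely the parameter-shifted $_{3}F_{2}$ on the right. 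This is entirely routine and needs no convergence subtleties beyond those guaranteeing absolute convergence at $z=1$ for the $k$ at hand.

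Then I would substitute $\alpha =a$, $\beta =b$, $\gamma =c+k$, $\delta =d+1$, $\epsilon =c$, and $z=1$. The left-hand side becomes $_{3}F_{2}\left( a,b,c+k+1;d+1,c;1\right) =G_{k}\left( a,b,c,d\right)$, the first term on the right becomes $_{3}F_{2}\left( a,b,c+k;d+1,c;1\right) =G_{k-1}\left( a,b,c,d\right)$, and the coefficient collapses to $\frac{ab}{c\left( d+1\right) }$. The only point requiring care is matching the shifted companion series to $G_{k-1}\left( a+1,b+1,c+1,d+1\right)$: its parameters are $a+1,b+1,c+k+1$ over $d+2,c+1$, which is exactly $_{3}F_{2}\left( a+1,b+1,(c+1)+(k-1)+1;(d+1)+1,c+1;1\right)$, i.e. definition \eqref{G_k_Miller_def} at index $k-1$ under $\left( a,b,c,d\right) \mapsto \left( a+1,b+1,c+1,d+1\right)$. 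This yields \eqref{Recursive_Miller}. The initial value \eqref{G0_Miller} is quoted from \cite{Miller} and requires no separate derivation, so the whole content of the proof is the contiguous relation together with this substitution. The main obstacle is thus not analytical but bookkeeping: verifying that both lower parameters and the single incremented upper parameter realign simultaneously to the $k-1$ instance of $G$ under the joint shift of all four arguments.
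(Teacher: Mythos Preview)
Your proposal is correct and follows essentially the same route as the paper: invoke the contiguous relation \cite[Eqn.~3.7.9]{Andrews} (which you additionally derive by series comparison) and specialize the parameters to recover \eqref{Recursive_Miller}. The only cosmetic difference is that you take $\gamma=c+k$, landing directly on the $G_{k}/G_{k-1}$ form, whereas the paper sets $\gamma=c+k+1$ and obtains the equivalent $G_{k+1}/G_{k}$ version.
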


\begin{proof}
Consider the contiguous relation \cite[Eqn. 3.7.9]{Andrews}, exchanging the
parameters $\alpha \longleftrightarrow \gamma $, 
\begin{eqnarray}
&&_{3}F_{2}\left( \left. 
\begin{array}{c}
\alpha ,\beta ,\gamma +1 \\ 
\delta ,\varepsilon%
\end{array}%
\right\vert 1\right)  \label{Miller_1} \\
&=&\,_{3}F_{2}\left( \left. 
\begin{array}{c}
\alpha ,\beta ,\gamma \\ 
\delta ,\varepsilon%
\end{array}%
\right\vert 1\right) +\frac{\alpha \beta }{\delta \varepsilon }%
\,_{3}F_{2}\left( \left. 
\begin{array}{c}
\alpha +1,\beta +1,\gamma +1 \\ 
\delta +1,\varepsilon +1%
\end{array}%
\right\vert 1\right) .  \notag
\end{eqnarray}%
and perform the substitutions $\alpha =a$, $\beta =b$, $\gamma =c+k+1$, $%
\delta =d+1$ and $\varepsilon =c$, to obtain the recursive formula (\ref%
{Recursive_Miller}).
\end{proof}

\begin{remark}
Notice that $G_{0}\left( a,b,c,c\right) $ reduces to Gauss's summation
formula \cite[Eqn. 15.4.20]{DLMF}.
\end{remark}

\bigskip

From the above recursive equation (\ref{Recursive_Miller}), we obtain the
following identity.

\begin{theorem}
If $k=0,1,2\ldots $, then%
\begin{eqnarray}
&&\,_{3}F_{2}\left( \left. 
\begin{array}{c}
a,b,c+k+1 \\ 
d+1,c%
\end{array}%
\right\vert 1\right)  \label{Miller_relation} \\
&=&\frac{\left( -1\right) ^{k}\Gamma \left( d+1\right) \Gamma \left(
d-a-b-k\right) \left( a+b-d+1\right) _{k}}{c\,\Gamma \left( d-a+1\right)
\Gamma \left( d-b+1\right) }  \notag \\
&&\times \left\{ \left[ a\left( b-c\right) +c\left( d-b\right) \right]
\,_{3}F_{2}\left( \left. 
\begin{array}{c}
-k,a,b \\ 
c+1,a+b-d+1%
\end{array}%
\right\vert 1\right) \right.  \notag \\
&&+\left. \frac{ab\left( c-d\right) k}{\left( c+1\right) \left(
a+b-d+1\right) }\,_{3}F_{2}\left( \left. 
\begin{array}{c}
1-k,a+1,b+1 \\ 
c+2,a+b-d+2%
\end{array}%
\right\vert 1\right) \right\} ,  \notag
\end{eqnarray}%
where notice that on the RHS\ of (\ref{Miller_relation})\ the hypergeometric
sums are finite sums.
\end{theorem}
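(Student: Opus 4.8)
The plan is to solve the recursion (\ref{Recursive_Miller}) in closed form by direct iteration down to the base case $G_{0}$, rather than to verify (\ref{Miller_relation}) by induction on $k$ (which would be awkward, since the recursion shifts all four parameters at once). First I would unfold (\ref{Recursive_Miller}) completely. Writing $\lambda(a,b,c,d)=\frac{ab}{c(d+1)}$ for the multiplier appearing there, a short induction on $k$ using Pascal's rule shows that the repeated branching (stay, or shift-up with weight $\lambda$ at the current point) produces a single binomial sum,
\begin{equation*}
G_{k}(a,b,c,d)=\sum_{j=0}^{k}\binom{k}{j}\left[\prod_{i=0}^{j-1}\lambda(a+i,b+i,c+i,d+i)\right]G_{0}(a+j,b+j,c+j,d+j),
\end{equation*}
where the telescoping product collapses to $\prod_{i=0}^{j-1}\lambda(a+i,\ldots)=\frac{(a)_{j}(b)_{j}}{(c)_{j}(d+1)_{j}}$.

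Next I would substitute the explicit value (\ref{G0_Miller}) of $G_{0}$ at the shifted arguments $(a+j,b+j,c+j,d+j)$. The decisive simplification is that the common shift cancels in several slots: one finds $d'-a'+1=d-a+1$ and $d'-b'+1=d-b+1$ (both independent of $j$), while the bracket in (\ref{G0_Miller}) collapses to $[a(b-c)+c(d-b)]+j(d-c)$. Using $\Gamma(d+j+1)=\Gamma(d+1)(d+1)_{j}$ together with the elementary identity $\Gamma(d-a-b-j)=(-1)^{j}\Gamma(d-a-b)/(a+b-d+1)_{j}$, the factor $(d+1)_{j}$ cancels and the $j$-independent prefactor $\frac{\Gamma(d+1)\Gamma(d-a-b)}{\Gamma(d-a+1)\Gamma(d-b+1)}$ factors out. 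That this matches the prefactor of (\ref{Miller_relation}) is exactly the identity $(-1)^{k}(a+b-d+1)_{k}=(d-a-b-k)_{k}$, i.e.\ $\Gamma(d-a-b)=\Gamma(d-a-b-k)\,(d-a-b-k)_{k}$; the residual $1/c$ of (\ref{Miller_relation}) will emerge in the final step.

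What remains is a single finite sum over $j$, with summand $\frac{(-k)_{j}(a)_{j}(b)_{j}}{j!\,(c)_{j}(a+b-d+1)_{j}}\cdot\frac{[a(b-c)+c(d-b)]+j(d-c)}{c+j}$, after rewriting $\binom{k}{j}(-1)^{j}=(-k)_{j}/j!$. The key step is to split this along the two pieces of the numerator. The $j$-free piece $a(b-c)+c(d-b)$, combined with $\frac{1}{c+j}=\frac{1}{c}\frac{(c)_{j}}{(c+1)_{j}}$, reassembles into the first ${}_{3}F_{2}$ of (\ref{Miller_relation}), whose lower parameter is $c+1$, with the coefficient $\frac{1}{c}[a(b-c)+c(d-b)]$. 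The piece carrying the factor $j$ annihilates the $j=0$ term; shifting the index $j\mapsto j+1$ and peeling one factor off each of the five Pochhammer symbols (so that $-k,a,b,c,a+b-d+1$ become $1-k,a+1,b+1,c+2,a+b-d+2$) reassembles into the second ${}_{3}F_{2}$ and produces the explicit constant $\frac{ab(c-d)k}{c(c+1)(a+b-d+1)}$.

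The main obstacle I anticipate is purely one of bookkeeping in this last reindexing: keeping the five Pochhammer shifts and the $1/(c+j)$ rewriting mutually consistent so that the emergent constant matches $\frac{ab(c-d)k}{(c+1)(a+b-d+1)}$ (inside the braces) exactly, without sign or index errors. As a final consistency check I would specialize to $k=0$, where the upper entry $-k=0$ truncates the first ${}_{3}F_{2}$ to its $j=0$ value $1$, and the explicit factor $k$ annihilates the second ${}_{3}F_{2}$, so the right-hand side of (\ref{Miller_relation}) reduces precisely to $G_{0}$ as given in (\ref{G0_Miller}).
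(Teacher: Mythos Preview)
Your proposal is correct and, in its second half, coincides with the paper's argument: both arrive at the single finite sum
\[
G_{k}(a,b,c,d)=\frac{\Gamma(d+1)\Gamma(d-a-b)}{c\,\Gamma(d-a+1)\Gamma(d-b+1)}\sum_{j=0}^{k}\binom{k}{j}\frac{(-1)^{j}(a)_{j}(b)_{j}}{(c+1)_{j}(a+b-d+1)_{j}}\bigl[a(b-c)+c(d-b)+j(d-c)\bigr],
\]
and then split off the constant and the $j(d-c)$ pieces into the two terminating ${}_{3}F_{2}$'s exactly as in (\ref{Miller_relation}).

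The genuine difference is in how this intermediate sum is obtained. The paper computes $G_{1}$ and $G_{2}$ symbolically, \emph{conjectures} the closed form (\ref{G_k_Sum}), and then checks it by induction on $k$ via (\ref{Recursive_Miller}). You instead unfold (\ref{Recursive_Miller}) directly: the Pascal-rule argument gives $G_{k}=\sum_{j}\binom{k}{j}\frac{(a)_{j}(b)_{j}}{(c)_{j}(d+1)_{j}}G_{0}(a+j,b+j,c+j,d+j)$ with no guesswork, and then the substitution of (\ref{G0_Miller}) at shifted arguments---where the cancellations $d'-a'=d-a$, $d'-b'=d-b$ and the collapse of the bracket to $[a(b-c)+c(d-b)]+j(d-c)$ do all the work---delivers the same sum constructively. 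Your route is cleaner in that it removes the need for computer-algebra-assisted pattern spotting and makes the mechanism (a uniform shift that leaves two of the Gamma arguments fixed) transparent; the paper's route has the minor advantage that the induction step, once the target is in hand, is a one-line verification. The bookkeeping you flag in the final reindexing is indeed the only delicate point, and your description of it (use $j\binom{k}{j}=k\binom{k-1}{j-1}$, peel one factor from each Pochhammer, absorb $1/(c+j)$ via $(c)_{j}/(c+1)_{j}=c/(c+j)$) is correct and yields exactly the constant $\frac{ab(c-d)k}{(c+1)(a+b-d+1)}$ inside the braces.
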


\begin{proof}
With the aid of computer algebra, we can iterate the first terms of the
recursive equation (\ref{Recursive_Miller}), starting from (\ref{G0_Miller}%
), obtaining:%
\begin{eqnarray*}
&&G_{1}\left( a,b,c,d\right) \\
&=&-\frac{\Gamma \left( d+1\right) \Gamma \left( d-a-b-1\right) }{c\,\Gamma
\left( d-a+1\right) \Gamma \left( d-b+1\right) } \\
&&\left\{ \left( a+b-d-1\right) \left[ a\left( b-c\right) +c\left(
d-b\right) \right] 
\begin{array}{c}
\displaystyle
\\ 
\displaystyle%
\end{array}%
\right. \\
&&\left. 
\begin{array}{c}
\displaystyle
\\ 
\displaystyle%
\end{array}%
-\frac{ab\left[ a\left( b-c\right) -\left( b+1\right) c+\left( c+1\right) d%
\right] }{c+1}\right\} , \\
&&G_{2}\left( a,b,c,d\right) \\
&=&\frac{\Gamma \left( d+1\right) \Gamma \left( d-a-b-2\right) }{c\,\Gamma
\left( d-a+1\right) \Gamma \left( d-b+1\right) } \\
&&\left\{ \left( a+b-d-1\right) \left( a+b-d-2\right) \left[ a\left(
b-c\right) +c\left( d-b\right) \right] 
\begin{array}{c}
\displaystyle
\\ 
\displaystyle%
\end{array}%
\right. \\
&&-\frac{2ab\left( a+b-d-2\right) \left[ a\left( b-c\right) -\left(
b+1\right) c+\left( c+1\right) d\right] }{c+1} \\
&&\left. 
\begin{array}{c}
\displaystyle
\\ 
\displaystyle%
\end{array}%
+\frac{a\left( a+1\right) b\left( b+1\right) \left[ a\left( b-c\right)
-\left( b+2\right) c+\left( c+2\right) d\right] }{\left( c+1\right) \left(
c+2\right) }\right\} .
\end{eqnarray*}%
Therefore, we can conjecture the following general form,%
\begin{eqnarray}
&&G_{k}\left( a,b,c,d\right)  \label{G_k_Sum} \\
&=&\frac{\left( -1\right) ^{k}\Gamma \left( d+1\right) \Gamma \left(
d-a-b-k\right) }{c\,\Gamma \left( d-a+1\right) \Gamma \left( d-b+1\right) } 
\notag \\
&&\sum_{j=0}^{k}\frac{\left( -1\right) ^{j}\left( a+b-d+j+1\right)
_{k-j}\left( a\right) _{j}\left( b\right) _{j}}{\left( c+1\right) _{j}}%
\binom{k}{j}  \notag \\
&&\quad \times \left[ a\left( b-c\right) -\left( b+j\right) c+\left(
c+j\right) d\right] ,  \notag
\end{eqnarray}%
which can be proved by induction. Finally, split the sum given in (\ref%
{G_k_Sum})\ in two sums and recast them as hypergeometric sums, to obtain,%
\begin{eqnarray}
&&G_{k}\left( a,b,c,d\right)  \label{G_k_resultado} \\
&=&\frac{\left( -1\right) ^{k}\Gamma \left( d+1\right) \Gamma \left(
d-a-b-k\right) \Gamma \left( a+b-d+k+1\right) }{c\,\Gamma \left(
d-a+1\right) \Gamma \left( d-b+1\right) \Gamma \left( a+b-d+1\right) } 
\notag \\
&&\left\{ \left[ a\left( b-c\right) +c\left( d-c\right) \right]
\,_{3}F_{2}\left( \left. 
\begin{array}{c}
-k,a,b \\ 
c+1,a+b-d+1%
\end{array}%
\right\vert 1\right) \right.  \notag \\
&&+\left. \frac{ab\left( c-d\right) k}{\left( c+1\right) \left(
a+b-d+1\right) }\,_{3}F_{2}\left( \left. 
\begin{array}{c}
1-k,a+1,b+1 \\ 
c+2,a+b-d+2%
\end{array}%
\right\vert 1\right) \right\} .  \notag
\end{eqnarray}%
Finally, match (\ref{G_k_resultado})\ to (\ref{G_k_Miller_def}),\ to obtain (%
\ref{Miller_relation}).
\end{proof}

\bigskip

Next, we consider an extension of Pfaff-Saalschutz summation formula.

\begin{theorem}
If $k,n\in 
\mathbb{N}
$, then 
\begin{eqnarray}
&&G_{k}\left( n,a,b,c\right) =G_{k-1}\left( n,a+1,b,c+1\right)
\label{Recursive_Pfaff} \\
&&+\frac{nb}{\left( c+k\right) \left( a+b+1-n-c\right) }G_{k-1}\left(
n-1,a+1,b+1,c+2\right) ,  \notag
\end{eqnarray}%
where%
\begin{equation*}
G_{k}\left( n,a,b,c\right) =\,_{3}F_{2}\left( \left. 
\begin{array}{c}
-n,a,b \\ 
c+k,a+b+1-n-c%
\end{array}%
\right\vert 1\right) ,
\end{equation*}%
and, according to Pfaff-Saalschutz balanced sum \cite[Eqn. 16.4.3]{DLMF},%
\begin{equation}
G_{0}\left( n,a,b,c\right) =\frac{\left( c-a\right) _{n}\left( c-b\right)
_{n}}{\left( c\right) _{n}\left( c-a-b\right) _{n}}.  \label{Pfaff_sum}
\end{equation}
\end{theorem}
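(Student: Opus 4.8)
The plan is to follow verbatim the strategy used for the recursive formula (\ref{Recursive_Miller}), invoking the very same contiguous relation \cite[Eqn. 3.7.9]{Andrews} but incrementing a different numerator parameter. Because the series $_{3}F_{2}$ is symmetric in its three numerator parameters, the identity
\[
_{3}F_{2}\left( \left.
\begin{array}{c}
\alpha ,\beta ,\gamma +1 \\
\delta ,\varepsilon
\end{array}
\right\vert 1\right) =\,_{3}F_{2}\left( \left.
\begin{array}{c}
\alpha ,\beta ,\gamma \\
\delta ,\varepsilon
\end{array}
\right\vert 1\right) +\frac{\alpha \beta }{\delta \varepsilon }\,_{3}F_{2}\left( \left.
\begin{array}{c}
\alpha +1,\beta +1,\gamma +1 \\
\delta +1,\varepsilon +1
\end{array}
\right\vert 1\right)
\]
stays valid after the interchange $\beta \leftrightarrow \gamma $; the resulting relation increments the \emph{middle} numerator parameter and carries the coefficient $\alpha \gamma /\left( \delta \varepsilon \right) $ in front of the fully up-shifted term.

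First I would substitute $\alpha =-n$, $\beta =a$, $\gamma =b$, $\delta =c+k$, and $\varepsilon =a+b+1-n-c$ into this interchanged relation. A direct check shows that its three hypergeometric functions are precisely $G_{k-1}\left( n,a+1,b,c+1\right) $, $G_{k}\left( n,a,b,c\right) $, and $G_{k-1}\left( n-1,a+1,b+1,c+2\right) $: in the first of these the lower parameter $\left( c+1\right) +\left( k-1\right) $ collapses to $c+k$ while $\left( a+1\right) +b+1-n-\left( c+1\right) $ simplifies back to $a+b+1-n-c$, and in the up-shifted term the lower parameters become $c+k+1$ and $a+b+2-n-c$, matching exactly the definition of $G_{k-1}\left( n-1,a+1,b+1,c+2\right) $.

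Next I would read off the coefficient $\alpha \gamma /\left( \delta \varepsilon \right) =-nb/\left[ \left( c+k\right) \left( a+b+1-n-c\right) \right] $ and solve the interchanged relation for $G_{k}\left( n,a,b,c\right) $; the up-shifted term then crosses the equality with a sign change, yielding the coefficient $+nb/\left[ \left( c+k\right) \left( a+b+1-n-c\right) \right] $, which is exactly (\ref{Recursive_Pfaff}). For the base case $k=0$, the series $G_{0}\left( n,a,b,c\right) $ is balanced, since its parameters satisfy $c+\left( a+b+1-n-c\right) =\left( -n+a+b\right) +1$, so the Pfaff-Saalschutz summation formula \cite[Eqn. 16.4.3]{DLMF} supplies (\ref{Pfaff_sum}).

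The only delicate point---and the nearest thing to an obstacle---is the bookkeeping that confirms the two lower parameters recombine correctly under the simultaneous shifts $n\rightarrow n-1$, $a\rightarrow a+1$, $b\rightarrow b+1$, $c\rightarrow c+2$; once this is verified, the argument is a transcription of the proof of (\ref{Recursive_Miller}) with the roles of the second and third numerator parameters exchanged.
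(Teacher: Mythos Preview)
Your proposal is correct and essentially identical to the paper's own proof: both apply the contiguous relation \cite[Eqn.~3.7.9]{Andrews} in the permuted form
\[
{}_{3}F_{2}\!\left(\left.\begin{array}{c}\alpha ,\beta +1,\gamma \\ \delta ,\varepsilon\end{array}\right|1\right)
={}_{3}F_{2}\!\left(\left.\begin{array}{c}\alpha ,\beta ,\gamma \\ \delta ,\varepsilon\end{array}\right|1\right)
+\frac{\alpha \gamma }{\delta \varepsilon }\,{}_{3}F_{2}\!\left(\left.\begin{array}{c}\alpha +1,\beta +1,\gamma +1 \\ \delta +1,\varepsilon +1\end{array}\right|1\right)
\]
and make the very same substitutions $\alpha =-n$, $\beta =a$, $\gamma =b$, $\delta =c+k$, $\varepsilon =a+b+1-n-c$; the only cosmetic difference is that the paper reaches this permuted form by swapping $\alpha \leftrightarrow \beta$ in the original Andrews relation, whereas you swap $\beta \leftrightarrow \gamma$ in the already-permuted version (\ref{Miller_1}).
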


\begin{proof}
Consider again the contiguous relation given in \cite[Eqn. 3.7.9]{Andrews},
exchanging the parameters $\alpha \longleftrightarrow \beta $, thus 
\begin{eqnarray*}
&&_{3}F_{2}\left( \left. 
\begin{array}{c}
\alpha ,\beta +1,\gamma \\ 
\delta ,\varepsilon%
\end{array}%
\right\vert 1\right) \\
&=&\,_{3}F_{2}\left( \left. 
\begin{array}{c}
\alpha ,\beta ,\gamma \\ 
\delta ,\varepsilon%
\end{array}%
\right\vert 1\right) +\frac{\alpha \gamma }{\delta \varepsilon }%
\,_{3}F_{2}\left( \left. 
\begin{array}{c}
\alpha +1,\beta +1,\gamma +1 \\ 
\delta +1,\varepsilon +1%
\end{array}%
\right\vert 1\right) .
\end{eqnarray*}%
Now, performing the substitutions $\alpha =-n$, $\beta =a$, $\gamma =b$, $%
\delta =c+k$, and $\varepsilon =a+b+1-n-c$, we arrive at the recursive
equation (\ref{Recursive_Pfaff}).
\end{proof}

\bigskip

Again, iterating the recursive equation (\ref{Recursive_Pfaff}), we can
generalize Pfaff-Saalschutz summation formula as follows:

\begin{theorem}
If $n\in 
\mathbb{N}
$ and $k=0,1,2,\ldots $, then%
\begin{eqnarray}
&&_{3}F_{2}\left( \left. 
\begin{array}{c}
-n,a,b \\ 
c+k,a+b+1-n-c%
\end{array}%
\right\vert 1\right)  \label{Identity_Pfaff} \\
&=&\frac{\left( c-a\right) _{n}\left( c-b+k\right) _{n}}{\left( c+k\right)
_{n}\left( c-a-b\right) _{n}}\,_{3}F_{2}\left( \left. 
\begin{array}{c}
-k,-n,b \\ 
c-a,b-c-k-n+1%
\end{array}%
\right\vert 1\right) ,  \notag
\end{eqnarray}%
where $k=0$ matches Pfaff-Saalschutz summation formula (\ref{Pfaff_sum}).
\end{theorem}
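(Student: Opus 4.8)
The plan is to prove (\ref{Identity_Pfaff}) by induction on $k$, verifying that its right-hand side — which I abbreviate $R_{k}(n,a,b,c)$ — satisfies the recursion (\ref{Recursive_Pfaff}) with the correct seed. The base case $k=0$ is immediate: the upper parameter $-k=0$ forces the ${}_{3}F_{2}$ on the right to terminate after its $m=0$ term, so $R_{0}(n,a,b,c)$ collapses to $\frac{(c-a)_{n}(c-b)_{n}}{(c)_{n}(c-a-b)_{n}}$, which is precisely $G_{0}(n,a,b,c)$ in (\ref{Pfaff_sum}).

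For the inductive step I would assume (\ref{Identity_Pfaff}) at level $k-1$ for every admissible tuple and insert the closed forms $R_{k-1}(n,a+1,b,c+1)$ and $R_{k-1}(n-1,a+1,b+1,c+2)$ into the right-hand side of (\ref{Recursive_Pfaff}). The decisive bookkeeping observation is that the shift $a\to a+1,\ c\to c+1$ leaves $c-a$ and $c-a-b$ fixed while sending $c-b+(k-1)\mapsto c-b+k$ and $c+(k-1)\mapsto c+k$; hence the Pochhammer prefactor of $R_{k-1}(n,a+1,b,c+1)$ is identical to that of the target $R_{k}(n,a,b,c)$. After factoring out this common prefactor and using $a+b+1-n-c=-(c-a-b+n-1)$ to simplify the Pochhammer ratio multiplying the second term, the recursion image reduces to $F_{2}-\frac{nb}{(c-a)(c-b+k+n-1)}\,F_{3}$, where $F_{2}={}_{3}F_{2}(-(k-1),-n,b;\,c-a,\,b-c-k-n+1;1)$ and $F_{3}={}_{3}F_{2}(-(k-1),-(n-1),b+1;\,c-a+1,\,b-c-k-n+2;1)$. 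It remains to identify this with the hypergeometric factor $F_{1}={}_{3}F_{2}(-k,-n,b;\,c-a,\,b-c-k-n+1;1)$ of $R_{k}$.

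The required identity $F_{1}=F_{2}-\frac{nb}{(c-a)(c-b+k+n-1)}\,F_{3}$ is exactly the contiguous relation \cite[Eqn. 3.7.9]{Andrews} already used in (\ref{Miller_1}), read with $\alpha=-n$, $\beta=b$, $\gamma=-k$, $\delta=c-a$ and $\varepsilon=b-c-k-n+1$ and with the third upper parameter playing the shifting role: its left-hand side ${}_{3}F_{2}(\alpha,\beta,\gamma+1;\delta,\varepsilon;1)$ is $F_{2}$, its unshifted term is $F_{1}$, and its $\frac{\alpha\beta}{\delta\varepsilon}$-term is $F_{3}$ with coefficient $(-n)b/[(c-a)(b-c-k-n+1)]$.

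The one place demanding care — and the only real obstacle — is the sign and relabelling bookkeeping: one must check that $b-c-k-n+1=-(c-b+k+n-1)$ converts the contiguous coefficient $(-n)b/[(c-a)(b-c-k-n+1)]$ into the $-nb/[(c-a)(c-b+k+n-1)]$ emitted by the recursion, and that the triple $-(k-1),-(n-1),b+1$ of $F_{3}$ coincides, up to reordering, with the $\alpha+1,\beta+1,\gamma+1$ produced by the relation. Once these agree, the contiguous relation closes the induction and yields (\ref{Identity_Pfaff}); setting $k=0$ then returns (\ref{Pfaff_sum}), consistent with the base case.
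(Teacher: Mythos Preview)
Your proof is correct and follows essentially the same plan as the paper: verify the closed form by induction on $k$ against the recursion (\ref{Recursive_Pfaff}), anchored at the Pfaff--Saalsch\"utz sum. The only difference is cosmetic: the paper first conjectures the intermediate finite-sum representation (\ref{G_K_Pfaff_sum}) and proves \emph{that} by induction before recasting it as a terminating ${}_{3}F_{2}$, whereas you carry out the induction directly on the hypergeometric form and close it neatly by reapplying the same contiguous relation \cite[Eqn.~3.7.9]{Andrews} that generated the recursion. Your bookkeeping of the Pochhammer ratios and the sign $b-c-k-n+1=-(c-b+k+n-1)$ is accurate, and the identification of $F_{1},F_{2},F_{3}$ with the three terms of the contiguous relation is correct; your route is slightly more economical since it avoids the intermediate sum.
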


\begin{proof}
Computing the first iterations of (\ref{Recursive_Pfaff}), starting from (%
\ref{Pfaff_sum}), we can conjecture that%
\begin{eqnarray}
&&G_{k}\left( n,a,b,c\right)  \notag \\
&=&\frac{\Gamma \left( n-a-c\right) }{\Gamma \left( c-b+k\right) \left(
c+k\right) _{n}\left( c-a-b\right) _{n}}  \notag \\
&&\sum_{j=0}^{k}\left( -1\right) ^{j}\binom{k}{j}\frac{\left( b\right)
_{j}\left( n-j+1\right) _{j}\Gamma \left( c-b+n+k-j\right) }{\Gamma \left(
c-a+j\right) },  \label{G_K_Pfaff_sum}
\end{eqnarray}%
which can be proven by induction. Finally, the sum given in (\ref%
{G_K_Pfaff_sum})\ can be rewritten as a terminating hypergeometric sum,
obtaining (\ref{Identity_Pfaff}).
\end{proof}

\begin{theorem}
If $k\in 
\mathbb{N}
$, we have the recursive formula%
\begin{eqnarray}
&&G_{k+1}\left( a,b,c\right) =\frac{a\left[ a-1-2\left( b+c+k\right) \right] 
}{k\left( b+c+k\right) }G_{k}\left( a+1,b+1,c+1\right)
\label{Recursive_Dixon} \\
&&-\frac{\left( a-b\right) \left( a-c\right) }{k\left( b+c+k\right) }%
G_{k}\left( a-1,b,c\right) ,  \notag
\end{eqnarray}%
where%
\begin{equation*}
G_{k}\left( a,b,c\right) =\,_{3}F_{2}\left( \left. 
\begin{array}{c}
a,b+k,c+k \\ 
a-b+1,a-c+1%
\end{array}%
\right\vert 1\right) ,
\end{equation*}%
and where, taking $i=1$ and $j=0$ in \cite{Lavoie}, we have%
\begin{eqnarray*}
G_{1}\left( a,b,c\right) &=&\frac{\Gamma \left( 1+a-b\right) \Gamma \left(
1+a-c\right) }{2^{2c+1}bc\Gamma \left( a-2c\right) \Gamma \left(
a-b-c\right) } \\
&&\left[ \frac{\Gamma \left( \frac{a+1}{2}-c\right) \Gamma \left( \frac{a}{2}%
-b-c\right) }{\Gamma \left( \frac{a+1}{2}\right) \Gamma \left( \frac{a}{2}%
-b\right) }-\frac{\Gamma \left( \frac{a+1}{2}-b-c\right) \Gamma \left( \frac{%
a}{2}-c\right) }{\Gamma \left( \frac{a}{2}\right) \Gamma \left( \frac{a+1}{2}%
-b\right) }\right] .
\end{eqnarray*}
\end{theorem}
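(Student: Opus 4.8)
The plan is to follow exactly the template used for the recursive Kummer formula (\ref{Recursive_Kummer}) and the Srivastava formula (\ref{Recursive_Srivastava}): locate a contiguous relation connecting the three hypergeometric functions appearing in (\ref{Recursive_Dixon}), particularize its parameters, and read off the coefficients. First I would rewrite the three ${}_{3}F_{2}$'s with generic labels $\alpha ,\beta ,\gamma $ (numerator) and $\delta ,\varepsilon $ (denominator) and set
\[
\alpha =a,\quad \beta =b+k+1,\quad \gamma =c+k+1,\quad \delta =a-b+1,\quad \varepsilon =a-c+1 .
\]
A direct check then gives $G_{k+1}\left( a,b,c\right) =\,_{3}F_{2}\left( \alpha ,\beta ,\gamma ;\delta ,\varepsilon \mid 1\right) $, that $G_{k}\left( a+1,b+1,c+1\right) $ differs from it only by raising the first numerator parameter $\alpha $ by one unit, and that $G_{k}\left( a-1,b,c\right) $ is obtained by lowering all five parameters $\alpha ,\beta ,\gamma ,\delta ,\varepsilon $ by one unit. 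Thus the required identity is a contiguous relation linking a base ${}_{3}F_{2}$, its first-numerator shift, and its fully shifted (``diagonal'') companion---precisely the structure met in the Kummer case, now one degree higher.

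The crucial point is that no single elementary relation suffices. Applying the relation \cite[Eqn. 3.7.9]{Andrews} to $G_{k+1}$ (raising its first numerator) already produces $G_{k}\left( a+1,b+1,c+1\right) $ and $G_{k+1}$, but it also introduces a further all-raised term, so a second contiguous relation is needed to re-express this unwanted function through $G_{k}\left( a-1,b,c\right) $, after which elimination of the common ${}_{3}F_{2}$---exactly as (\ref{Lebedev_1}) and (\ref{Lebedev_2}) were eliminated in the Srivastava proof---should collapse everything to (\ref{Recursive_Dixon}). I would carry out this elimination with computer algebra, drawing the building blocks from \cite[Sect. 48]{Rainville}. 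It is worth noting that, although the coefficients $\frac{a\left[ a-1-2\left( b+c+k\right) \right] }{k\left( b+c+k\right) }$ and $-\frac{\left( a-b\right) \left( a-c\right) }{k\left( b+c+k\right) }$ display an explicit $k$, they are in fact rational functions of the generic parameters through $k=\delta +\beta -\alpha -2$ and $b+c+k=\alpha +\gamma -\delta $; this is precisely how the denominators $k$ and $b+c+k$ are generated upon specialization, and is a useful consistency check when verifying the coefficient collapse.

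The initial iteration $G_{1}\left( a,b,c\right) $ is supplied by the $i=1$, $j=0$ case of the generalization of Dixon's theorem in \cite{Lavoie} recorded in the statement, so the recursion (\ref{Recursive_Dixon}) together with this value determines every $G_{k}$. The hard part will be that the purely Kummer-style route via the differentiation formula \cite[Eqn. 15.5.1]{DLMF} degenerates here: in (\ref{Recursive_Kummer}) the factor $z\left( 1-z\right) $ multiplying the fully shifted term equals $-2$ at $z=-1$ and survives, whereas at the well-poised point $z=1$ it vanishes, so one is forced to work with the algebraic contiguous relations, which remain nondegenerate at $z=1$, rather than with a derivative identity. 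Consequently the delicate step is to choose the two relations to be combined so that, after eliminating the intermediate ${}_{3}F_{2}$'s and imposing the Dixon-type substitution above, every spurious coefficient cancels and the right-hand side reduces to exactly the two stated terms; this bookkeeping is routine but unavoidable and best delegated to computer algebra, with creative telescoping \cite[Sects. 3.10\&11]{Andrews} available as a fallback should a closed three-term relation prove awkward to assemble by hand.
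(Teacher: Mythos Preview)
Your strategy coincides with the paper's: combine two ${}_{3}F_{2}(1)$ contiguous relations, eliminate an intermediate hypergeometric, and specialize to the Dixon parametrization. You correctly single out \cite[Eqn.~3.7.9]{Andrews} as one ingredient; the paper uses precisely this relation together with \cite[Eqn.~3.7.12]{Andrews} (after the shift $\alpha\to\alpha+1$), eliminates the common term ${}_{3}F_{2}(\alpha+1,\beta,\gamma;\delta,\varepsilon\mid 1)$, and then substitutes $\alpha=a-1$, $\beta=b+k$, $\gamma=c+k$, $\delta=a-b$, $\varepsilon=a-c$.

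The one place where your sketch is less efficient than the paper is the choice of base point. You anchor the generic labels at $G_{k+1}(a,b,c)$; then \cite[Eqn.~3.7.9]{Andrews} produces the ``all-raised'' term $G_{k+1}(a+1,b+1,c+1)$, and you propose to trade this for $G_{k}(a-1,b,c)$ via ``a second contiguous relation''. But these two functions differ by \emph{two} units in every parameter, so no single elementary contiguous relation joins them; you would need a further intermediate and a second elimination. The paper avoids this by anchoring at $G_{k}(a-1,b,c)$ instead. With that choice, the three desired functions $G_{k}(a-1,b,c)$, $G_{k+1}(a,b,c)$, $G_{k}(a+1,b+1,c+1)$ appear as the base, the all-raised, and the $(\alpha+2,\beta+1,\gamma+1;\delta+1,\varepsilon+1)$ entries respectively, while the term being eliminated, ${}_{3}F_{2}(a,b+k,c+k;a-b,a-c\mid 1)$, is not of the form $G_{j}$ at all---so nothing wanted is lost. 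The coefficient $(\delta-\alpha-1)(\varepsilon-\alpha-1)-\beta\gamma$ then collapses to $-k(b+c+k)$, which is your observation $k=\delta+\beta-\alpha-2$, $b+c+k=\alpha+\gamma-\delta$ read at the shifted base. So your plan works, but choosing the base at the ``all-lowered'' corner reduces it to a single elimination rather than the two your route would require.
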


\begin{proof}
Consider the contiguous relation \cite[Eqn. 3.7.12]{Andrews}, performing the
substitution $\alpha \rightarrow \alpha +1$,%
\begin{eqnarray}
&&\delta \varepsilon \,_{3}F_{2}\left( \left. 
\begin{array}{c}
\alpha +1,\beta ,\gamma \\ 
\delta ,\varepsilon%
\end{array}%
\right\vert 1\right)  \label{Dixon_1} \\
&=&\left( \alpha +1\right) \left( \delta +\varepsilon -\alpha -\beta -\gamma
-2\right) \,_{3}F_{2}\left( \left. 
\begin{array}{c}
\alpha +2,\beta +1,\gamma +1 \\ 
\delta +1,\varepsilon +1%
\end{array}%
\right\vert 1\right)  \notag \\
&&+\left( \delta -\alpha -1\right) \left( \varepsilon -\alpha -1\right)
\,_{3}F_{2}\left( \left. 
\begin{array}{c}
\alpha +1,\beta +1,\gamma +1 \\ 
\delta +1,\varepsilon +1%
\end{array}%
\right\vert 1\right) ,  \notag
\end{eqnarray}%
and also the contiguous relation \cite[Eqn. 3.7.9]{Andrews} 
\begin{eqnarray}
_{3}F_{2}\left( \left. 
\begin{array}{c}
\alpha +1,\beta ,\gamma \\ 
\delta ,\varepsilon%
\end{array}%
\right\vert 1\right) &=&\,_{3}F_{2}\left( \left. 
\begin{array}{c}
\alpha ,\beta ,\gamma \\ 
\delta ,\varepsilon%
\end{array}%
\right\vert 1\right)  \label{Dixon_2} \\
&&+\frac{\beta \gamma }{\delta \varepsilon }\,_{3}F_{2}\left( \left. 
\begin{array}{c}
\alpha +1,\beta +1,\gamma +1 \\ 
\delta +1,\varepsilon +1%
\end{array}%
\right\vert 1\right) .  \notag
\end{eqnarray}%
Thereby, eliminating the hypergeometric sum of the RHS\ of (\ref{Dixon_1})\
and (\ref{Dixon_2}), we arrive at%
\begin{eqnarray}
&&\delta \varepsilon \,_{3}F_{2}\left( \left. 
\begin{array}{c}
\alpha ,\beta ,\gamma \\ 
\delta ,\varepsilon%
\end{array}%
\right\vert 1\right)  \label{Dixon_3} \\
&&-\left( \alpha +1\right) \left( \delta +\varepsilon -\alpha -\beta -\gamma
-2\right) \,_{3}F_{2}\left( \left. 
\begin{array}{c}
\alpha +2,\beta +1,\gamma +1 \\ 
\delta +1,\varepsilon +1%
\end{array}%
\right\vert 1\right)  \notag \\
&=&\left[ \left( \delta -\alpha -1\right) \left( \varepsilon -\alpha
-1\right) -\beta \gamma \right] \,_{3}F_{2}\left( \left. 
\begin{array}{c}
\alpha +1,\beta +1,\gamma +1 \\ 
\delta +1,\varepsilon +1%
\end{array}%
\right\vert 1\right) .  \notag
\end{eqnarray}%
Finally, substituting in (\ref{Dixon_3}) $\alpha =a$, $\beta =b+k$, $\gamma
=c+k$, $\delta =a-b+1$ and $\varepsilon =a-c+1$, we arrive at the recursive
equation (\ref{Recursive_Dixon}).
\end{proof}

\begin{remark}
Notice that $\forall k=0$, $G_{0}\left( a,b,c\right) $ reduces to Dixon's
theorem \cite[Eqn. 16.4.4]{DLMF}, but in this case the recursive equation (%
\ref{Recursive_Dixon}) collapses.
\end{remark}

\bigskip

The next two recursive formulas extend Watson's sum \cite[Eqn. 16.4.6]{DLMF}%
. In \cite{LavoieWatson},\ we can find two other extensions of Watson's sum
by using contiguous relations. Next, we extend one of the results given in 
\cite{LavoieWatson}, being the latter the particular case $k=1$ of the next
theorem.

\begin{theorem}
If $k\in 
\mathbb{N}
$, then the following recursive equation holds true:%
\begin{eqnarray}
&&G_{k}\left( a,b,c\right)  \label{Recursive_Watson_Lavoie} \\
&=&G_{k-1}\left( a,b,c\right) -\frac{2abc\ G_{k-1}\left( a+1,b+1,c+1\right) 
}{\left( a+b+1\right) \left( 2c+k-1\right) \left( 2c+k\right) },  \notag
\end{eqnarray}%
where%
\begin{equation*}
G_{k}\left( a,b,c\right) =\,_{3}F_{2}\left( \left. 
\begin{array}{c}
a,b,c \\ 
\frac{a+b+1}{2},2c+k%
\end{array}%
\right\vert 1\right) ,
\end{equation*}%
and, according to Watson's sum \cite[Eqn. 16.4.6]{DLMF}, we have%
\begin{equation}
G_{0}\left( a,b,c\right) =\frac{\sqrt{\pi }\Gamma \left( c+\frac{1}{2}%
\right) \Gamma \left( \frac{a+b+1}{2}\right) \Gamma \left( \frac{1-a-b}{2}%
+c\right) }{\Gamma \left( \frac{a+1}{2}\right) \Gamma \left( \frac{b+1}{2}%
\right) \Gamma \left( \frac{1-a}{2}+c\right) \Gamma \left( \frac{1-b}{2}%
+c\right) }.  \label{Watson_sum}
\end{equation}
\end{theorem}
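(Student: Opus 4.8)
The plan is to obtain (\ref{Recursive_Watson_Lavoie}) from a single contiguous relation that shifts the last \emph{lower} parameter by one unit. The three series appearing in the recursion differ in a very specific way: $G_{k-1}\left( a,b,c\right) $ and $G_{k}\left( a,b,c\right) $ carry identical numerator parameters $a,b,c$ and identical first denominator parameter $\frac{a+b+1}{2}$, differing only in the last denominator parameter, which increases from $2c+k-1$ to $2c+k$; whereas $G_{k-1}\left( a+1,b+1,c+1\right) $ is obtained by raising every numerator parameter and the first denominator parameter by one. This is precisely the footprint of a denominator-raising contiguous identity, so such a relation, suitably specialized, should collapse directly onto (\ref{Recursive_Watson_Lavoie}).

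First I would establish the contiguous relation
\[
_{3}F_{2}\left( \left.
\begin{array}{c}
\alpha ,\beta ,\gamma \\
\delta ,\varepsilon
\end{array}
\right\vert 1\right) -\,_{3}F_{2}\left( \left.
\begin{array}{c}
\alpha ,\beta ,\gamma \\
\delta ,\varepsilon +1
\end{array}
\right\vert 1\right) =\frac{\alpha \beta \gamma }{\varepsilon \delta \left(
\varepsilon +1\right) }\,_{3}F_{2}\left( \left.
\begin{array}{c}
\alpha +1,\beta +1,\gamma +1 \\
\delta +1,\varepsilon +2
\end{array}
\right\vert 1\right) ,
\]
which follows immediately from the series definition (\ref{pFq_def}) by using the elementary identity $\frac{1}{\left( \varepsilon \right) _{m}}-\frac{1}{\left( \varepsilon +1\right) _{m}}=\frac{m}{\varepsilon \left( \varepsilon +1\right) _{m}}$ and reindexing $m\mapsto m+1$ in the resulting single sum.

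Then I would substitute $\alpha =a$, $\beta =b$, $\gamma =c$, $\delta =\frac{a+b+1}{2}$ and $\varepsilon =2c+k-1$. The first two terms become $G_{k-1}\left( a,b,c\right) $ and $G_{k}\left( a,b,c\right) $; the prefactor reduces to $\frac{2abc}{\left( a+b+1\right) \left( 2c+k-1\right) \left( 2c+k\right) }$; and the shifted series on the right acquires first denominator parameter $\delta +1=\frac{a+b+3}{2}$ and last denominator parameter $\varepsilon +2=2c+k+1$, which is exactly $G_{k-1}\left( a+1,b+1,c+1\right) $. Solving the resulting equation for $G_{k}\left( a,b,c\right) $ then yields (\ref{Recursive_Watson_Lavoie}), with the base case $G_{0}\left( a,b,c\right) $ furnished by Watson's sum (\ref{Watson_sum}).

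The hard part will be the parameter bookkeeping rather than any computation: because the iteration index $k$ sits inside the denominator parameter $2c+k$, which already contains $c$, raising $c$ to $c+1$ adds $2$ to that parameter while lowering the index $k$ to $k-1$ subtracts $1$, for a net change of $+1$; one must check that this net shift matches exactly the single denominator increment $\varepsilon \mapsto \varepsilon +2$ produced by the contiguous relation (recall that $\varepsilon +1=2c+k$, so $\varepsilon +2=2c+k+1$). Verifying that this one increment reproduces the combined $c$- and $k$-shift hidden in $G_{k-1}\left( a+1,b+1,c+1\right) $ is the only delicate point; no convergence difficulties intervene, since every series in play is a $_{3}F_{2}$ at unit argument whose parameters stay within the convergence region recalled at the start of the section.
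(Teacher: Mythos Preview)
Your proposal is correct. You arrive at the same intermediate contiguous relation
\[
{}_{3}F_{2}\!\left(\begin{array}{c}\alpha,\beta,\gamma\\ \delta,\varepsilon\end{array}\Big|1\right)
-\,{}_{3}F_{2}\!\left(\begin{array}{c}\alpha,\beta,\gamma\\ \delta,\varepsilon+1\end{array}\Big|1\right)
=\frac{\alpha\beta\gamma}{\delta\,\varepsilon(\varepsilon+1)}\,
{}_{3}F_{2}\!\left(\begin{array}{c}\alpha+1,\beta+1,\gamma+1\\ \delta+1,\varepsilon+2\end{array}\Big|1\right)
\]
and then perform the same specialization $\alpha=a$, $\beta=b$, $\gamma=c$, $\delta=\tfrac{a+b+1}{2}$, $\varepsilon=2c+k-1$ as the paper; your bookkeeping check that $\varepsilon+2=2(c+1)+(k-1)$ reproduces $G_{k-1}(a+1,b+1,c+1)$ is exactly the point. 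The only methodological difference is in how this contiguous relation is obtained: the paper synthesizes it by equating two three-term relations taken from \cite[Sect.~3.7]{Andrews} (one raising $\gamma$ and $\varepsilon$ simultaneously, the other raising $\gamma$ alone) and eliminating the common ${}_{3}F_{2}$, whereas you derive it in one stroke from the series definition via the Pochhammer identity $\tfrac{1}{(\varepsilon)_m}-\tfrac{1}{(\varepsilon+1)_m}=\tfrac{m}{\varepsilon(\varepsilon+1)_m}$. Your route is more self-contained and slightly more elementary; the paper's route has the advantage of staying within the catalogue of contiguous relations used throughout the article.
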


\begin{proof}
Exchanging $\beta \longleftrightarrow \gamma $ and $\delta
\longleftrightarrow \varepsilon $ in the recursive relation given in \cite[%
Sect. 3.7]{Andrews}, we have%
\begin{eqnarray*}
&&_{3}F_{2}\left( \left. 
\begin{array}{c}
\alpha ,\beta ,\gamma +1 \\ 
\delta ,\varepsilon +1%
\end{array}%
\right\vert 1\right) \\
&=&\,_{3}F_{2}\left( \left. 
\begin{array}{c}
\alpha ,\beta ,\gamma \\ 
\delta ,\varepsilon%
\end{array}%
\right\vert 1\right) +\frac{\alpha \beta \left( \varepsilon -\gamma \right) 
}{\delta \varepsilon \left( \varepsilon +1\right) }\,_{3}F_{2}\left( \left. 
\begin{array}{c}
\alpha +1,\beta +1,\gamma +1 \\ 
\delta +1,\varepsilon +2%
\end{array}%
\right\vert 1\right) .
\end{eqnarray*}%
Also, performing the change $\varepsilon \rightarrow \varepsilon +1$ in (\ref%
{Miller_1}), 
\begin{eqnarray*}
&&_{3}F_{2}\left( \left. 
\begin{array}{c}
\alpha ,\beta ,\gamma +1 \\ 
\delta ,\varepsilon +1%
\end{array}%
\right\vert 1\right) \\
&=&\,_{3}F_{2}\left( \left. 
\begin{array}{c}
\alpha ,\beta ,\gamma \\ 
\delta ,\varepsilon +1%
\end{array}%
\right\vert 1\right) +\frac{\alpha \beta }{\delta \left( \varepsilon
+1\right) }\,_{3}F_{2}\left( \left. 
\begin{array}{c}
\alpha +1,\beta +1,\gamma +1 \\ 
\delta +1,\varepsilon +2%
\end{array}%
\right\vert 1\right) .
\end{eqnarray*}%
Therefore, equating the above equations, we arrive at 
\begin{eqnarray*}
&&_{3}F_{2}\left( \left. 
\begin{array}{c}
\alpha ,\beta ,\gamma \\ 
\delta ,\varepsilon +1%
\end{array}%
\right\vert 1\right) \\
&=&\,_{3}F_{2}\left( \left. 
\begin{array}{c}
\alpha ,\beta ,\gamma \\ 
\delta ,\varepsilon%
\end{array}%
\right\vert 1\right) -\frac{\alpha \beta \gamma }{\delta \left( \varepsilon
+1\right) }\,_{3}F_{2}\left( \left. 
\begin{array}{c}
\alpha +1,\beta +1,\gamma +1 \\ 
\delta +1,\varepsilon +2%
\end{array}%
\right\vert 1\right) .
\end{eqnarray*}%
Now, substituting $\alpha =a$, $\beta =b$, $\gamma =c$, $\delta =\frac{a+b+1%
}{2}$, and $\varepsilon =2c+k$, we arrive at the recursive equation (\ref%
{Recursive_Watson_Lavoie}).
\end{proof}

\begin{theorem}
If $k\in 
\mathbb{N}
$, then we have the following recursive equation:%
\begin{eqnarray}
&&G_{k}\left( a,b,c\right) =G_{k-1}\left( a,b,c\right) +\frac{b}{a+b+1}
\label{Recursive_Watson} \\
&&\left[ \frac{\left( a+k\right) \left( b+1\right) }{\left( 2c+1\right)
\left( a+b+3\right) }G_{k-1}\left( a+2,b+2,c+1\right) +G_{k-1}\left(
a+1,b+1,c\right) \right] ,  \notag
\end{eqnarray}%
where%
\begin{equation*}
G_{k}\left( a,b,c\right) =\,_{3}F_{2}\left( \left. 
\begin{array}{c}
a+k,b,c \\ 
\frac{a+b+1}{2},2c%
\end{array}%
\right\vert 1\right) ,
\end{equation*}%
and $G_{0}\left( a,b,c\right) $ is given by Watson's sum (\ref{Watson_sum}).
\end{theorem}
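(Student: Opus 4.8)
The plan is to assemble (\ref{Recursive_Watson}) from two contiguous relations applied one after the other, in the same style as the previous proofs. Writing
\[
G_{k}\left(a,b,c\right)=\,{}_{3}F_{2}\left(\left.\begin{array}{c}a+k,b,c\\\frac{a+b+1}{2},2c\end{array}\right\vert 1\right),
\]
the only difference between $G_{k}$ and $G_{k-1}\left(a,b,c\right)$ is that the first upper parameter is raised from $a+k-1$ to $a+k$, while $b$, $c$ and the two lower parameters $\frac{a+b+1}{2}$, $2c$ are unchanged; in particular the half-parameter $\frac{a+b+1}{2}$ does \emph{not} carry the shift $k$. Hence I would first apply the contiguous relation (\ref{Dixon_2}) with $\alpha=a+k-1$, $\beta=b$, $\gamma=c$, $\delta=\frac{a+b+1}{2}$, $\varepsilon=2c$. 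Using $\frac{\beta\gamma}{\delta\varepsilon}=\frac{bc}{\frac{a+b+1}{2}\,2c}=\frac{b}{a+b+1}$, this gives
\[
G_{k}\left(a,b,c\right)=G_{k-1}\left(a,b,c\right)+\frac{b}{a+b+1}\,{}_{3}F_{2}\left(\left.\begin{array}{c}a+k,b+1,c+1\\\frac{a+b+3}{2},2c+1\end{array}\right\vert 1\right).
\]

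It then remains to rewrite the leftover $_{3}F_{2}$, say $H$, in terms of two values of $G_{k-1}$. For this I would apply the contiguous relation derived in the proof of (\ref{Recursive_Watson_Lavoie}), namely the one obtained there by exchanging $\beta\leftrightarrow\gamma$ and $\delta\leftrightarrow\varepsilon$, now with $\alpha=a+k$, $\beta=b+1$, $\gamma=c$, $\delta=\frac{a+b+3}{2}$, $\varepsilon=2c$, so that its left-hand side is exactly $H$. The decisive simplification is that $\varepsilon-\gamma=2c-c=c=\varepsilon/2$, so the factor $\frac{\varepsilon-\gamma}{\varepsilon}$ collapses to $\frac12$ and the coefficient reduces to $\frac{(a+k)(b+1)}{(a+b+3)(2c+1)}$. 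This yields
\[
H={}_{3}F_{2}\left(\left.\begin{array}{c}a+k,b+1,c\\\frac{a+b+3}{2},2c\end{array}\right\vert 1\right)+\frac{(a+k)(b+1)}{(a+b+3)(2c+1)}\,{}_{3}F_{2}\left(\left.\begin{array}{c}a+k+1,b+2,c+1\\\frac{a+b+5}{2},2c+2\end{array}\right\vert 1\right).
\]

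Finally I would identify the two $_{3}F_{2}$'s on the right against the definition of $G_{k-1}$: checking the lower parameters shows that the first equals $G_{k-1}(a+1,b+1,c)$ and the second equals $G_{k-1}(a+2,b+2,c+1)$, since in each case the shifted first upper parameter is $a'+k-1$ while the lower parameters are $\frac{a'+b'+1}{2}$ and $2c'$ for the relevant $(a',b',c')$. Substituting this decomposition of $H$ back into the first step produces (\ref{Recursive_Watson}) at once. I do not expect a genuine obstacle here: the whole argument is a clean two-step chaining rather than the elimination needed for (\ref{Recursive_Watson_Lavoie}), and the only point demanding care is keeping track of the lower half-parameter $\frac{a+b+1}{2}$, which stays fixed under the $k$-shift but moves to $\frac{a+b+3}{2}$ and $\frac{a+b+5}{2}$ under the parameter increases, so that the two relations dovetail exactly.
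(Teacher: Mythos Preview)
Your proposal is correct and follows essentially the same route as the paper's proof: both arguments chain the contiguous relation \cite[Eqn.~3.7.9]{Andrews} (your (\ref{Dixon_2}), the paper's (\ref{Miller_1})) with the relation from \cite[Sect.~3.7]{Andrews} already used in the proof of (\ref{Recursive_Watson_Lavoie}), and then specialise parameters so that the factor $\delta-\alpha$ (resp.\ $\varepsilon-\gamma$) collapses to $c$. The only differences are cosmetic---the order in which the two relations are applied and the labelling of the Greek parameters---so your two-step presentation is equivalent to the paper's ``substitute (\ref{Watson_1}) into (\ref{Miller_1})''.
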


\begin{proof}
Exchanging $\alpha \longleftrightarrow \beta $ and setting $\beta
\rightarrow \beta +1$, $\gamma \rightarrow \gamma +1$, and $\varepsilon
\rightarrow \varepsilon +1$ in the recursive relation given in \cite[Sect.
3.7]{Andrews}, we have%
\begin{eqnarray}
&&_{3}F_{2}\left( \left. 
\begin{array}{c}
\alpha +1,\beta +1,\gamma +1 \\ 
\delta +1,\varepsilon +1%
\end{array}%
\right\vert 1\right)  \label{Watson_1} \\
&=&\frac{\left( \beta +1\right) \left( \delta -\alpha \right) \left( \gamma
+1\right) }{\delta \left( \delta +1\right) \left( \varepsilon +1\right) }%
\,_{3}F_{2}\left( \left. 
\begin{array}{c}
\alpha +1,\beta +2,\gamma +2 \\ 
\delta +2,\varepsilon +2%
\end{array}%
\right\vert 1\right)  \notag \\
&&+\,_{3}F_{2}\left( \left. 
\begin{array}{c}
\alpha ,\beta +1,\gamma +1 \\ 
\delta ,\varepsilon +1%
\end{array}%
\right\vert 1\right) .  \notag
\end{eqnarray}%
Now, substituting the RHS\ of (\ref{Watson_1})\ in (\ref{Miller_1}), we
arrive at%
\begin{eqnarray*}
&&_{3}F_{2}\left( \left. 
\begin{array}{c}
\alpha ,\beta +1,\gamma \\ 
\delta ,\varepsilon%
\end{array}%
\right\vert 1\right) \,=\,_{3}F_{2}\left( \left. 
\begin{array}{c}
\alpha ,\beta ,\gamma \\ 
\delta ,\varepsilon%
\end{array}%
\right\vert 1\right) \\
&&\quad +\frac{\delta \varepsilon }{\alpha \gamma }\left[ \frac{\left( \beta
+1\right) \left( \delta -\alpha \right) \left( \gamma +1\right) }{\delta
\left( \delta +1\right) \left( \varepsilon +1\right) }\,_{3}F_{2}\left(
\left. 
\begin{array}{c}
\alpha +1,\beta +2,\gamma +2 \\ 
\delta +2,\varepsilon +2%
\end{array}%
\right\vert 1\right) \right. \\
&&\quad +\left. \,_{3}F_{2}\left( \left. 
\begin{array}{c}
\alpha ,\beta +1,\gamma +1 \\ 
\delta ,\varepsilon +1%
\end{array}%
\right\vert 1\right) \right] \,.
\end{eqnarray*}%
Finally, taking $\alpha =c$, $\beta =a+k$, $\gamma =b$, $\delta =2c$, and $%
\varepsilon =\frac{a+b+1}{2}$, we arrive at the recursive equation (\ref%
{Recursive_Watson}).
\end{proof}

\bigskip

Finally, we provide and extension of a summation formula given by Bailey. By
induction from the recursive formula found, a simple closed-form expression
is derived.

\begin{theorem}
If $k=0,1,2,\ldots $, then the following recursive equation is satisfied:%
\begin{eqnarray}
&&G_{k+1}\left( a,b,c\right) =\frac{\left( 2c-b+k\right) \left(
2c-b+k+1\right) }{\left( a-1\right) \left( 2c-2b+k+1\right) \left(
c-b+k\right) }  \label{Recursive_Bailey} \\
&&\left[ \left( c-1\right) G_{k}\left( a-1,b-1,c-1\right) -\left( c-a\right)
G_{k}\left( a-1,b,c\right) \right] ,  \notag
\end{eqnarray}%
where%
\begin{eqnarray}
&&G_{k}\left( a,b,c\right)  \notag \\
&=&\,_{3}F_{2}\left( \left. 
\begin{array}{c}
a,b,c+1 \\ 
1+2c-b+k,c%
\end{array}%
\right\vert 1\right)  \notag \\
&=&\frac{\left[ \left( a-2c\right) \left( b-c\right) +kc\right] \Gamma
\left( 2c-b+k+1\right) \Gamma \left( 2c-a-2b+k\right) }{c\ \Gamma \left(
2c-2b+k+1\right) \Gamma \left( 2c-a-b+k+1\right) }.
\label{Gk_Bailey_resultado}
\end{eqnarray}
\end{theorem}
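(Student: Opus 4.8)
The plan is to prove the statement in two stages, mirroring the structure used throughout the paper: first establish the recursive equation \eqref{Recursive_Bailey} from known contiguous relations, and then derive the closed form \eqref{Gk_Bailey_resultado} by induction on $k$, using the recursion together with a base case supplied by Bailey's summation formula.

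For the recursion I would work with the contiguous relations for $_{3}F_{2}$ collected in \cite[Sect. 3.7]{Andrews}. Writing the series in \eqref{Gk_Bailey_resultado} as $_{3}F_{2}\left( \alpha ,\beta ,\gamma ;\delta ,\varepsilon \mid 1\right) $ with $\alpha =a$, $\beta =b$, $\gamma =c+1$, $\delta =1+2c-b+k$, and $\varepsilon =c$, I observe that $k$ enters only through the lower parameter $\delta $, so passing from $G_{k}$ to $G_{k+1}$ amounts to the shift $\delta \rightarrow \delta +1$. The target recursion, however, also lowers $a$ by one unit and mixes in the contiguous series $G_{k}\left( a-1,b-1,c-1\right) $ and $G_{k}\left( a-1,b,c\right) $, which in the $\left( \alpha ,\beta ,\gamma ;\delta ,\varepsilon \right) $ coordinates correspond to simultaneous unit shifts in several parameters. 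As in the proofs of \eqref{Recursive_Dixon} and \eqref{Recursive_Watson_Lavoie}, I would select two contiguous relations whose right-hand sides share a common contiguous $_{3}F_{2}$, eliminate that common term, and then specialize $\alpha ,\beta ,\gamma ,\delta ,\varepsilon $ to the values above so that the resulting three-term identity collapses exactly to \eqref{Recursive_Bailey}.

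With the recursion in hand, the closed form follows by induction on $k$. The base case $k=0$ is the assertion that $_{3}F_{2}\left( a,b,c+1;1+2c-b,c\mid 1\right) $ equals the right-hand side of \eqref{Gk_Bailey_resultado} with $k=0$, which is precisely Bailey's summation formula being extended; here the near-cancellation of the numerator parameter $c+1$ against the denominator parameter $c$, via $\left( c+1\right) _{m}/\left( c\right) _{m}=\left( c+m\right) /c$, is what reduces the series to an evaluable form and accounts for the polynomial prefactor $\left( a-2c\right) \left( b-c\right) +kc$. For the inductive step I would assume \eqref{Gk_Bailey_resultado} at level $k$ for all admissible $\left( a,b,c\right) $, substitute the closed forms of $G_{k}\left( a-1,b-1,c-1\right) $ and $G_{k}\left( a-1,b,c\right) $ into the right-hand side of \eqref{Recursive_Bailey}, and verify that the result simplifies to \eqref{Gk_Bailey_resultado} at level $k+1$. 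Since the inductive hypothesis is applied at integer-shifted arguments, every $\Gamma $-factor shifts by an integer, so the verification reduces to a rational-function identity after collecting the $\Gamma $-ratios through $\Gamma \left( z+1\right) =z\,\Gamma \left( z\right) $.

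The main obstacle I expect is the derivation of \eqref{Recursive_Bailey}: unlike the purely numerator-parameter shifts exploited earlier, the relevant move here raises a denominator parameter, and the coexistence of $c+1$ as an upper parameter with $c$ as a lower parameter makes the bookkeeping delicate. Choosing the two contiguous relations so that their common term cancels cleanly, rather than producing a four- or five-term identity, is the crux; once the correct pair is identified, both the specialization and the subsequent inductive simplification are routine and can be confirmed with computer algebra.
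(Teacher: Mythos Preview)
Your overall strategy---derive the recursion from contiguous relations, then prove the closed form by induction with Bailey's summation as the base case---is exactly what the paper does. The inductive step you outline matches the paper's argument essentially verbatim.

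The one place where you diverge is in the derivation of \eqref{Recursive_Bailey}. You anticipate having to combine \emph{two} contiguous relations and eliminate a shared term, as in the proofs of \eqref{Recursive_Dixon} and \eqref{Recursive_Watson_Lavoie}, and you flag this elimination as ``the crux'' and ``the main obstacle''. In fact the paper gets the recursion from a \emph{single} three-term contiguous relation, namely \cite[Eqn.~3.7.14]{Andrews}:
\begin{eqnarray*}
&&\varepsilon \,_{3}F_{2}\left( \left.
\begin{array}{c}
\alpha ,\beta ,\gamma \\
\delta ,\varepsilon
\end{array}\right\vert 1\right) -\left( \varepsilon -\alpha \right) \,_{3}F_{2}\left( \left.
\begin{array}{c}
\alpha ,\beta +1,\gamma +1 \\
\delta +1,\varepsilon +1
\end{array}\right\vert 1\right) \\
&=&\frac{\alpha \left( \delta -\beta \right) \left( \delta -\gamma \right) }{\delta \left( \delta +1\right) }\,_{3}F_{2}\left( \left.
\begin{array}{c}
\alpha +1,\beta +1,\gamma +1 \\
\delta +2,\varepsilon +1
\end{array}\right\vert 1\right) ,
\end{eqnarray*}
with the specialization $\alpha =a-1$, $\beta =b-1$, $\gamma =c$, $\delta =2c-b+k$, $\varepsilon =c-1$ (a unit shift of your parametrization). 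The three terms are then precisely $G_{k}\left( a-1,b-1,c-1\right) $, $G_{k}\left( a-1,b,c\right) $, and $G_{k+1}\left( a,b,c\right) $, and solving for the last gives \eqref{Recursive_Bailey} with no elimination step needed. So the part you expected to be hardest is in fact the easiest; your plan would still work, but it is more laborious than necessary.
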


\begin{proof}
Considering the contiguous relation \cite[Eqn. 3.7.14]{Andrews}, we have%
\begin{eqnarray*}
&&\varepsilon \,_{3}F_{2}\left( \left. 
\begin{array}{c}
\alpha ,\beta ,\gamma \\ 
\delta ,\varepsilon%
\end{array}%
\right\vert 1\right) -\left( \varepsilon -\alpha \right) \,_{3}F_{2}\left(
\left. 
\begin{array}{c}
\alpha ,\beta +1,\gamma +1 \\ 
\delta +1,\varepsilon +1%
\end{array}%
\right\vert 1\right) \\
&=&\frac{\alpha \left( \delta -\beta \right) \left( \delta -\gamma \right) }{%
\delta \left( \delta +1\right) }\,_{3}F_{2}\left( \left. 
\begin{array}{c}
\alpha +1,\beta +1,\gamma +1 \\ 
\delta +2,\varepsilon +1%
\end{array}%
\right\vert 1\right) ,
\end{eqnarray*}%
thus, setting $\alpha =a-1$, $\beta =b-1$, $\gamma =c$, $\delta =2c-b+k$,
and $\varepsilon =c-1$, we arrive at the recursive equation (\ref%
{Recursive_Bailey}). Also, according to \cite[Eqn. 6.4(2)]{Bailey}, after
renaming the parameters, we have%
\begin{equation}
G_{0}\left( a,b,c\right) =\left( 1-\frac{a}{2c}\right) \frac{\Gamma \left(
2c-b+1\right) \Gamma \left( 2c-a-2b\right) }{\Gamma \left( 2c-2b\right)
\Gamma \left( 2c-a-b+1\right) },  \label{G0_Bailey}
\end{equation}%
hence, recursive substitution of (\ref{G0_Bailey})\ in (\ref%
{Recursive_Bailey}), after simplification,\ yields%
\begin{equation*}
G_{1}\left( a,b,c\right) =\frac{\left[ \left( a-2c\right) \left( b-c\right)
+c\right] \Gamma \left( 2c-b+2\right) \Gamma \left( 2c-a-2b+1\right) }{c\
\Gamma \left( 2c-2b+2\right) \Gamma \left( 2c-a-b+2\right) },
\end{equation*}%
and%
\begin{equation*}
G_{2}\left( a,b,c\right) =\frac{\left[ \left( a-2c\right) \left( b-c\right)
+2c\right] \Gamma \left( 2c-b+3\right) \Gamma \left( 2c-a-2b+2\right) }{c\
\Gamma \left( 2c-2b+3\right) \Gamma \left( 2c-a-b+3\right) }.
\end{equation*}%
Therefore, we may conjecture the general form given in (\ref%
{Gk_Bailey_resultado}), which can be proved easily by induction using the
recursive equation (\ref{Recursive_Bailey}).
\end{proof}

\section{Conclusions\label{Section: Conclusions}}

We have obtained some recursive formulas to extend some known $_{2}F_{1}$
and $_{3}F_{2}$ summation formulas by using contiguous relations. These
recursive equations are quite suitable for symbolic and numerical evaluation
by means of computer algebra. Moreover, in some cases, namely (\ref%
{Identity_2F1}), (\ref{Miller_relation}), (\ref{Identity_Pfaff}), and (\ref%
{Gk_Bailey_resultado}), we have derived closed-form expressions. Also, as
by-product, we have obtained an interesting identity in (\ref{Identity_Choi}%
). It is expected that the method used to obtain the different recursive
equations can be applied to extend other hypergeometric summation formulas
given in the literature.

\end{document}